\newtheorem{lemma}[equation]{Lemma}
\newtheorem{proposition}[equation]{Proposition}
\newtheorem{definition}[equation]{Definition}
\newtheorem{question}[equation]{Question}
\numberwithin{equation}{section}
\renewcommand{\r}{\mathrm}
\newcommand{\I}{\mathbf{i}\hspace{.05em}}
\newcommand{\langl}{\begin{picture}(.51em,1em)
\put(.11em,.33em){\rotatebox{60}{\line(1,0){.55em}}}
\put(.11em,.33em){\rotatebox{300}{\line(1,0){.55em}}}
\end{picture}}
\newcommand{\rangl}{\begin{picture}(.5em,1em)
\put(.09em,.33em){\rotatebox{120}{\line(1,0){.55em}}}
\put(.09em,.33em){\rotatebox{240}{\line(1,0){.55em}}}
\end{picture}}
\begin{document}

\begin{center}
\texttt{Comments, corrections,
and related references welcomed, as always!}\\[.5em]
{\TeX}ed \today
\\[.5em]
\vspace{2em}
\end{center}

\title%
[Adjoining universal inverses to free monoids]%
{Adjoining universal inverses to families of\\ elements of free monoids}
\thanks{%
Readable at \url{http://math.berkeley.edu/~gbergman/papers/}.
After publication, any updates, errata, related references,
etc., found will be noted at
\url{http://math.berkeley.edu/~gbergman/papers/abstracts}\,.\\
}

\subjclass[2020]{Primary: 20M05
Secondary: 03B25, 08A40
}
\keywords{free monoid, adjunction of universal inverses,
word problem}

\author{George M.\ Bergman}
\address{Department of Mathematics\\
University of California\\
Berkeley, CA 94720-3840, USA}
\email{gbergman@math.berkeley.edu}

\begin{abstract}
Let $\langl X\rangl$ be the free monoid on a generating set
$X,$ and suppose one adjoins to $\langl X\rangl$ universal
$\!2\!$-sided inverses to a finite set $S$ of its elements.
We note an elementary algorithm which yields a normal form for
elements of the resulting monoid~$M.$

In particular, either $M$ will be the free group on~$X,$
or, more generally, the free product as monoids of the
free group on a subset $X_0\subseteq X$ with the free monoid on
$X\setminus X_0,$ or $M$ will contain $\!1\!$-sided invertible elements
that are not $\!2\!$-sided invertible.

We then show that if $S$ is allowed to be infinite, a similar
normal form exists, though it cannot necessarily be
computed algorithmically.

We note work by others on
the related topic of monoids presented by finite
families of relations of the form~$w=1.$
\end{abstract}
\maketitle

\section{Introduction}\label{S.intro}

It is well-known that in a monoid $M,$ if a product $xy$
is invertible (has a $\!2\!$-sided inverse), the
product $yx$ need not be.
The bicyclic monoid $\langl x,\,y~|~xy=1\rangl$
is a standard example.
That example likewise shows that if $xyxy$ is invertible, none
of $yx,$ $yxy,$ or $yxyx$ need be.

On the other hand, if a product $xyx$ is invertible, we see
that $x$ has both a right and a left inverse, hence it
is invertible, hence so is $y,$ so the submonoid
of $M$ generated by $x,$ $y$ and $(xyx)^{-1}$ is a group.

What is going on here?

In this note, we study the structure of the monoid $M$ obtained by
starting with the free monoid $\langl X\rangl$ on a set $X,$ and
for some subset $S$ of its elements, adjoining universal inverses
to the members of~$S.$
In \S\ref{S.main} we develop a simple algorithm for
obtaining a well-behaved normal form for $M$ if $S$ is finite,
and in \S\ref{S.observations} make some further observations
on this construction.

If $S$ is not assumed finite, $M$ will be the direct
limit of the monoids arising in this way from finite subsets
$S'\subseteq S,$ and in \S\ref{S.inf} we show that $M$
inherits most of the properties obtained in the finite case
-- the notable exception being the property that its structure
can be computed algorithmically from the set~$S.$

In \S\ref{S.special}, we briefly compare these results
with results in the literature on monoids presented using
families of relations of the form~$w=1.$

\section{General conventions and observations}\label{S.easy}

We make here some straightforward
conventions and observations, so that
we will be able to call on them subsequently without digressing.
First, a convention.\vspace{.3em}
\begin{equation}\begin{minipage}[c]{35pc}\label{d.inv}
The words ``inverse'' and ``invertible'', unless modified by
``left'', ``right'' or ``one-sided'', will mean two-sided inverse
and two-sided invertible.\vspace{.35em}
\end{minipage}\end{equation}
Now some trivial observations.
Let $M$ be any monoid.\vspace{.3em}
\begin{equation}\begin{minipage}[c]{35pc}\label{d.rt+l}
If an element $w \in  M$ has at least one right inverse and
at least one left inverse, then $w$ is invertible in $M.$
All one-sided inverses of $w$ in $M$ are then equal,
and their common value is a $\!2\!$-sided inverse of $w.$\vspace{.35em}
\end{minipage}\end{equation}
\begin{equation}\begin{minipage}[c]{35pc}\label{d.div-1-sided_inv}
If an element $w \in M$ is right invertible in $M,$ then all left
divisors of $w$ are also right invertible in $M.$
If an element $w \in M$ is left invertible in $M,$ then all right
divisors of $w$ are left invertible in~$M.$
\end{minipage}\end{equation}\vspace{0em}

On the other hand, a {\em right} divisor of a right invertible
element need not, in general, be right invertible, and analogously
for a left divisor of a left invertible element.
E.g., in the monoid $\langl x,\,y,\,z~|~x\,y\,z=1\rangl,$
$y$ is a right divisor of the right-invertible
element $xy,$ but is not right-invertible,
and is a left divisor of the left-invertible
element $yz$ but not left-invertible.
However, a special case, where those implications are easily seen
to hold is:\vspace{.3em}
\begin{equation}\begin{minipage}[c]{35pc}\label{d.pull_unit_fr_1-sided}
If a right invertible element $w$ has a factorization
$w = uv,$ where $u$ is invertible, then $v$ is also right invertible.
Likewise, if a left invertible element $w$ has a factorization $w = vu,$
and $u$ is invertible, then $v$ is also left invertible.\vspace{.3em}
\end{minipage}\end{equation}
To see the right invertibility case, one conjugates an equation showing
right invertibility,
$uvz=1,$ by $u,$ and analogously for the left invertibility case.

Another notational convention:\vspace{.5em}
\begin{equation}\begin{minipage}[c]{35pc}\label{d.i(w)}
If $w$ is an element of a monoid $M,$ and we universally adjoin
to $M$ an inverse to $w,$ we will denote that inverse
$\I(w)$ and the resulting monoid $M\langl\I(w)\rangl.$
Similarly, if we adjoin inverses to all elements in
a subset $S\subseteq M,$ we shall use the notation
$M\langl\I(S)\rangl,$ or, if
$S=\{w_1,\dots,w_n\},$ $M\langl\I(w_1),\dots,\I(w_n)\rangl.$
\end{minipage}\end{equation}

(It would feel more natural to write $w^{-1}$ rather than $\I(w);$
but the notation $w^{-1}$ applies to all cases where an element $w$ has
an inverse, and I want a notation specific to an inverse universally
adjoined to an element not assumed to have one.)
\begin{equation}\begin{minipage}[c]{35pc}\label{d.same_symb}
In the context of~\eqref{d.i(w)} we will
generally use the same symbols for elements
of $M$ and for their images in $M\langl\I(S)\rangl$ (even
though elements that are distinct in $M$ may fall together
in $M\langl\I(S)\rangl).$
\end{minipage}\end{equation}

For an example where such elements fall together, let
$M= \langl x,\,y~|~xy=1\rangl.$
Then $yx$ and $1$ are distinct in $M,$ but fall together
in $M\langl\I(x)\rangl.$

Next, some observations requiring a bit more thought.

\begin{lemma}\label{L.S_&_S'}
Let $M$ be a monoid, and $S,$ $S'$ subsets of $M.$
Then the following conditions are equivalent:

{\rm(i)} In $M\langl\I(S)\rangl,$ all elements of $S'$ become
invertible, and
in $M\langl\I(S')\rangl,$ all elements of $S$ become invertible.

{\rm(ii)} There exist homomorphisms
$f: M\langl\I(S)\rangl\to M\langl\I(S')\rangl$
and $g: M\langl\I(S')\rangl\to M\langl\I(S)\rangl$
respecting the natural homomorphisms of $M$ into these monoids.

In this situation, the $f$ and $g$ of {\rm(ii)}
are in fact inverse to one another,
and give the unique isomorphism between
$M\langl\I(S)\rangl$ and $M\langl\I(S')\rangl$
respecting the homomorphisms of $M$ into those monoids.
\end{lemma}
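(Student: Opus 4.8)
The plan is to run everything off the universal property that defines $M\langl\I(S)\rangl$: it is the initial object of the category whose objects are homomorphisms $M\to N$ of monoids in which every element of $S$ acquires a (two-sided) inverse, and whose morphisms are homomorphisms commuting with the given maps out of $M.$ Concretely, for any such $M\to N$ there is exactly one homomorphism $M\langl\I(S)\rangl\to N$ respecting the maps from $M,$ and likewise with $S$ replaced by $S'.$ I will take this as the known input, together with the trivial remark that $M\langl\I(S)\rangl$ is itself an object of the category attached to $S,$ and $M\langl\I(S')\rangl$ of the category attached to $S'.$

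Next I would dispose of (i)$\Leftrightarrow$(ii). For (i)$\Rightarrow$(ii): if every element of $S'$ is invertible in $M\langl\I(S)\rangl,$ then the structure map $M\to M\langl\I(S)\rangl$ is an object of the category attached to $S',$ so the universal property of $M\langl\I(S')\rangl$ yields a (unique) homomorphism $g:M\langl\I(S')\rangl\to M\langl\I(S)\rangl$ respecting the maps from $M$; the symmetric half of (i) gives $f:M\langl\I(S)\rangl\to M\langl\I(S')\rangl$ in the same way. For (ii)$\Rightarrow$(i): given $f,$ each $w\in S$ is invertible in $M\langl\I(S)\rangl,$ hence $f(w)$ is invertible in $M\langl\I(S')\rangl$; but since $f$ commutes with the maps from $M,$ the element $f(w)$ is exactly the image of $w$ under $M\to M\langl\I(S')\rangl,$ so all of $S$ becomes invertible there, and symmetrically using $g$ for the other half of (i).

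For the final assertion, in the situation of (i)--(ii) observe that $g\circ f$ and $\mathrm{id}_{M\langl\I(S)\rangl}$ are both homomorphisms $M\langl\I(S)\rangl\to M\langl\I(S)\rangl$ respecting the maps from $M,$ and $M\langl\I(S)\rangl$ is an object of the category attached to $S$; so the uniqueness clause of its universal property forces $g\circ f=\mathrm{id},$ and symmetrically $f\circ g=\mathrm{id}.$ Hence $f$ and $g$ are mutually inverse isomorphisms, and any isomorphism $M\langl\I(S)\rangl\to M\langl\I(S')\rangl$ respecting the maps from $M$ must coincide with $f$ by that same uniqueness clause. There is no real obstacle here; the one point to keep in view is that the universal property delivers \emph{uniqueness} as well as existence of the comparison maps, since that is precisely what powers both the ``mutually inverse'' and the ``unique isomorphism'' statements — the rest is just bookkeeping to keep the roles of $S$ and $S'$ straight.
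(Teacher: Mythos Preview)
Your proof is correct and follows essentially the same approach as the paper's: (i)$\Rightarrow$(ii) via the universal property, (ii)$\Rightarrow$(i) trivially since homomorphisms preserve invertibility, and the final assertion via uniqueness. The only cosmetic difference is that the paper phrases the uniqueness step as ``uniqueness of inverses of invertible elements'' (i.e., each $\I(w)$ has nowhere to go but the unique inverse of $w$), whereas you invoke the uniqueness clause of the universal property directly --- but these are the same observation at two levels of abstraction.
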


\begin{proof}
(i) implies (ii) by the universal properties
of $M\langl\I(S)\rangl$ and $M\langl\I(S')\rangl.$
The reverse implication is trivial.
The final statement is easily deduced from the uniqueness
of inverses of invertible elements of a monoid.
\end{proof}

\begin{definition}\label{D.inv-eq}
If $M$ is a monoid, we shall call two subsets $S$
and $S'$ of $M$ {\em inverse-equivalent} if they satisfy the equivalent
conditions of Lemma~\ref{L.S_&_S'}.
\end{definition}

The following lemmas describe, in the context of a general monoid $M,$
two tools that we will apply in the next section to free monoids.

\begin{lemma}\label{L.delete_one}
Let $M$ be a monoid and $S$ a subset of $M.$
If $S$ contains two elements $u$ and $v$ and
their product $uv,$ then $S$ is inverse-equivalent to the
subset one gets by deleting from $S$ any {\em one} of these
three elements, as long as it is distinct from both of the others
{\rm(}though those two others
need not be distinct from each other{\rm)}.
\end{lemma}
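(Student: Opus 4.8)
The plan is to verify the first of the two equivalent conditions in Lemma~\ref{L.S_&_S'} directly, in each of the three cases. Write $S'$ for the set obtained from $S$ by deleting one of $u$, $v$, $uv$; by hypothesis the deleted element is distinct, as an element of $M$, from the other two, so those other two remain members of $S'$. Since $S'\subseteq S$, every element of $S'$ is among those to which inverses are adjoined in forming $M\langl\I(S)\rangl$, hence is invertible there; this is one half of condition~(i), and it holds in all three cases with no work. What remains is to show that in $M\langl\I(S')\rangl$ the single deleted element of $S$ becomes invertible.

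For this I would isolate the elementary fact that, in any monoid, among the three elements $u$, $v$, $uv$, invertibility of any two forces invertibility of the third: if $u$ and $v$ are invertible then so is their product $uv$; if $v$ and $uv$ are invertible then $u=(uv)\,v^{-1}$ is invertible; and if $u$ and $uv$ are invertible then $v=u^{-1}(uv)$ is invertible (using only that products, and these one-sided cancellations, of invertible elements are invertible, cf.~\eqref{d.rt+l}). Applying this inside $M\langl\I(S')\rangl$: in each of the three cases, the two members of $\{u,v,uv\}$ that were \emph{not} deleted lie in $S'$, hence are invertible in $M\langl\I(S')\rangl$, so by the observation the deleted member is invertible there as well. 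Thus condition~(i) of Lemma~\ref{L.S_&_S'} holds, and $S$ and $S'$ are inverse-equivalent.

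I do not anticipate a genuine obstacle here; once the three-element observation is stated, the argument is purely formal. The one point that wants a word of care is the role of the distinctness hypothesis, which is exactly what guarantees that both undeleted elements of $\{u,v,uv\}$ survive in $S'$. When the ``two others'' happen to coincide — say $u=v$, so the three elements are really $u$ and $u^2$ — the argument still applies to the legitimate deletion (remove $u^2$, keep $u$; invertibility of $u$ gives that of $u^2$), whereas deleting $u$ itself is precisely what the hypothesis forbids, since that element is also $v$.
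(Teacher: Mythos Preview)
Your proof is correct and follows essentially the same route as the paper's: verify condition~(i) of Lemma~\ref{L.S_&_S'}, using $S'\subseteq S$ for one direction and, for the other, the identities $u=(uv)\,v^{-1}$, $v=u^{-1}(uv)$ together with closure of invertible elements under products. Your closing remarks on the role of the distinctness hypothesis make explicit a point the paper leaves implicit.
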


\begin{proof}
It will suffice to show that for the given $S,$ condition (i) of
Lemma~\ref{L.S_&_S'}
holds for $S'$ the set obtained by deleting from $S$ the
element in question.
Clearly every element of $S'$ will be invertible
in $M\langl\I(S)\rangl;$ it remains to show that the
element we have deleted from $S$ in forming $S'$ remains
invertible in $M\langl\I(S')\rangl.$
If that element is $uv,$ this is true because a product of
invertible elements is invertible.
If it is $u,$ it will again be the product of
invertible elements $u = (uv)\,\I(v),$
and if it is $v,$ it will likewise be the product $v =\I(u)(uv).$
\end{proof}

\begin{lemma}\label{L.left-div_rt-div}
Let $M$ be a monoid, $S$ a subset of $M,$ and $w$ any element of $M.$
Then if, in $M,$ $w$ is a left divisor of some $u\in S$ and
is also a right divisor of some $v\in S,$ then $S\cup{\{w\}}$
is inverse-equivalent to~$S.$
\end{lemma}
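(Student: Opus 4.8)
The plan is to verify condition~(i) of Lemma~\ref{L.S_&_S'} with the pair $(S,\,S\cup\{w\})$. One direction is immediate: in $M\langl\I(S\cup\{w\})\rangl$ every element of $S$ becomes invertible, since $S\subseteq S\cup\{w\}$. So the whole content is the other direction: showing that $w$ itself becomes invertible in $M\langl\I(S)\rangl$. Once that is done, every element of $S\cup\{w\}$ is invertible there, and the lemma follows.

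To prove $w$ is invertible in $M\langl\I(S)\rangl$, I would use the two divisibility hypotheses separately. Since $w$ is a left divisor of $u\in S$, write $u = w\,u'$ in $M$; passing to $M\langl\I(S)\rangl$ (using the convention~\eqref{d.same_symb} of reusing symbols), $u$ is invertible there, so by~\eqref{d.div-1-sided_inv} its left divisor $w$ is right invertible in $M\langl\I(S)\rangl$. Dually, since $w$ is a right divisor of $v\in S$, write $v = v'\,w$; then $v$ is invertible in $M\langl\I(S)\rangl$, so by~\eqref{d.div-1-sided_inv} its right divisor $w$ is left invertible there. Having exhibited $w$ as both right invertible and left invertible in $M\langl\I(S)\rangl$, observation~\eqref{d.rt+l} gives that $w$ is (two-sided) invertible there. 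This establishes condition~(i), hence inverse-equivalence.

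I do not expect a genuine obstacle here; the argument is essentially a packaging of~\eqref{d.rt+l} and~\eqref{d.div-1-sided_inv}. The only point that needs a word of care is the implicit identification of $w$, $u$, $v$ with their images in $M\langl\I(S)\rangl$ under the natural map, and the fact that a factorization $u = w\,u'$ valid in $M$ remains valid after applying a homomorphism — both of which are routine given the conventions already set up. It is worth remarking that, by symmetry, the roles of ``left divisor'' and ``right divisor'' in the hypothesis are interchangeable: what matters is that $w$ divides one member of $S$ on each side.
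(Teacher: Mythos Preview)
Your proof is correct and follows exactly the same route as the paper's: verify condition~(i) of Lemma~\ref{L.S_&_S'}, noting that the only nontrivial point is invertibility of $w$ in $M\langl\I(S)\rangl$, which follows by applying~\eqref{d.div-1-sided_inv} twice (to get one-sided invertibility on each side) and then~\eqref{d.rt+l}. The paper's version is simply a terser statement of the same argument.
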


\begin{proof}
The one non-obvious condition we need is that the image of $w$
in $M\langl\I(S)\rangl$ is invertible.
By~\eqref{d.div-1-sided_inv}, that image is right and left invertible
in $M\langl\I(S)\rangl,$ so by~\eqref{d.rt+l}, it is invertible there.
\end{proof}

\section{Monoids obtained by adjoining to a
free monoid inverses to finitely many elements}\label{S.main}

We turn, at last, to the case of extensions of free monoids.

\begin{definition}\label{D.main}
For any set $X,$ the free monoid on $X$ will be
denoted $\langl X\rangl.$
The result of adjoining universal inverses to
elements of $\langl X\rangl$ {\rm(}which
in the general context of~\eqref{d.i(w)} would be denoted
$\langl X\rangl\langl\I(w_1),\dots,\I(w_n)\rangl$
or $\langl X\rangl\langl\I(S)\rangl)$ will be
written $\langl X:\,\I(w_1),\dots,\I(w_n)\rangl$ or
$\langl X:\,\I(S)\rangl.$

We shall say that two nonidentity elements of $\langl X\rangl$
{\em overlap} if one of them can be written $ab$ and the other $bc,$
where $b\neq 1,$ and at most one of $a$ and $c$ equals $1.$
{\rm(}Note that if $a = 1,$
then $ab=b,$ a proper left substring of $bc,$ while
if $c = 1,$ $bc=b,$ a proper right substring of $ab.)$
If neither $a$ nor $c$ equals $1,$ we shall say that $ab$ and $bc$
{\em overlap noninclusively}.

We shall call a subset $T\subseteq \langl X\rangl\setminus\{1\}$
{\em overlap-free} if no pair of elements of $T$
{\rm(}distinct or not{\rm)} overlap.
\end{definition}

(The point of the phrase ``distinct or not'' is to make clear
that an overlap-free set is not allowed to contain
self-overlapping elements, such as $x\,y\,x$ or $x\,x.)$

\begin{proposition}\label{P.oE_T}
Let $X$ be a set, and $S$ any finite subset of the free monoid
$\langl X\rangl.$
Then there is a finite overlap-free
subset $T$ of $\langl X\rangl\setminus\{1\}$
inverse-equivalent to $S$ {\rm(}Definition~\ref{D.inv-eq}{\rm)}.

Such a $T$ can be constructed algorithmically from $S.$
\end{proposition}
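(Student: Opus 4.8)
The plan is to transform $S$, through a sequence of inverse-equivalent finite subsets of $\langl X\rangl$, into one that is overlap-free, shrinking it each time in a suitable well-founded sense.

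First I would record two elementary moves. (a) If $1\in S$ then $S$ is inverse-equivalent to $S\setminus\{1\}$, so we may assume $1\notin S$. (b) If $u$ and $uc$ both lie in $S$ with $c\neq1$, then $S$ is inverse-equivalent to $(S\setminus\{uc\})\cup\{c\}$: in $\langl X:\,\I(S)\rangl$ one has $c=\I(u)\,(uc)$, a product of invertibles, while conversely $uc=u\cdot c$; symmetrically, if $u$ and $au$ lie in $S$ with $a\neq1$, then $S$ is inverse-equivalent to $(S\setminus\{au\})\cup\{a\}$. (These are close in spirit to Lemma~\ref{L.delete_one}.) Combining~(b) with Lemma~\ref{L.left-div_rt-div}, which lets us harmlessly adjoin a word that is simultaneously a left and a right divisor of members of $S$, I would set up four rewriting rules, each replacing $S$ by an inverse-equivalent finite set: (R1) replace a pair $u,\,uc$ of members of $S$ (with $c\neq1$) by deleting $uc$ and adjoining $c$; (R2) the left--right mirror of~(R1); (R3) given distinct $p=ab$ and $q=bc$ in $S$ with $a,b,c\neq1$, first adjoin $b$ (a right divisor of $p$ and a left divisor of $q$, so Lemma~\ref{L.left-div_rt-div} applies), then apply~(R2) to $b,\,ab$ and~(R1) to $b,\,bc$, the net effect being to replace $\{p,q\}$ by $\{a,b,c\}$; (R4) given $w\in S$ which overlaps itself, say $w=ab=bc$ with $a,b,c\neq1$, first adjoin $b$, then apply~(R2) to $b,\,ab$, replacing $w$ by $\{a,b\}$. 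Since inverse-equivalence is transitive --- an immediate consequence of Lemma~\ref{L.S_&_S'}, since the resulting isomorphisms over $\langl X\rangl$ compose --- every set arising is inverse-equivalent to the original $S$.

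It then remains to check that (i) a set to which none of (R1)--(R4) applies is overlap-free, and (ii) the process terminates. For~(i): were $p,q\in S$ to overlap, with one of them equal to $ab$, the other to $bc$, $b\neq1$, and not both $a,c$ equal to $1$, then $a=1$ would make~(R1) applicable, $c=1$ would make~(R2) applicable, and when $a,c\neq1$ either~(R3) applies (if $p\neq q$) or~(R4) applies (if $p=q$) --- a contradiction. For~(ii): let $N$ be the greatest length of a member of the original $S$; since each rule only ever adjoins words that are proper subwords of words it deletes, all sets arising are finite and consist of words of length $\le N$. Assign to such a set $S'$ the vector $\lambda(S')=(a_N,a_{N-1},\dots,a_0)$ with $a_k=\#\{\,w\in S':|w|=k\,\}$, ordered lexicographically, most significant entry first. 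Because each rule adjoins only words strictly shorter than some word it deletes, the largest coordinate at which $\lambda$ changes is one at which it strictly decreases; so $\lambda$ decreases at every step, and since $\mathbb{N}^{N+1}$ is well-ordered lexicographically the process halts --- necessarily at an overlap-free finite subset $T\subseteq\langl X\rangl\setminus\{1\}$ inverse-equivalent to $S$. For the algorithmic claim I would note that, for a finite set of finite words, whether each rule applies is decidable (search for a common prefix, common suffix, overlap, or self-overlap) and applying a rule is effective; so the algorithm is: apply rules until none applies, which by~(ii) terminates.

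The step I expect to be the real obstacle is the termination argument, in particular handling the self-overlap rule~(R4): there a single word gets replaced by two, so the total length of $S$ need not drop, and no measure built from total length --- or from maximum length alone, since~(R3) and~(R4) may well act on short words --- suffices; the lexicographic measure on the vector of length-counts is what makes all four rules simultaneously decreasing. A secondary point needing care is that ``overlap'' as defined includes self-overlaps and inclusive (prefix/suffix) overlaps, which is why the case analysis in~(i) requires four rules rather than two.
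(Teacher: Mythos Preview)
Your argument is correct and is essentially the paper's own proof: the same elementary moves (via Lemmas~\ref{L.delete_one} and~\ref{L.left-div_rt-div}), the same case split on the type of overlap, and the same algorithmic conclusion.  The one difference is the termination measure --- where you use a lexicographic vector of length-counts, the paper uses the single integer $\mathrm{wt}(U)=\sum_{w\in U}(\mathrm{lgth}(w)-1)$, which \emph{does} decrease under your rule~(R4) as well (total length is unchanged while the number of summands increases by one), so your worry that ``no measure built from total length'' would work was unfounded.
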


\begin{proof}
If $1\in S,$ then deleting that element clearly will not change
the inverse-equivalence class of $S,$ so we may assume without loss
of generality that $S\subseteq\langl X\rangl\setminus\{1\}.$

We will define below a function associating to every finite
subset $U$ of $\langl X\rangl\setminus\{1\}$ a nonnegative
integer ``weight'', $\mathrm{wt}(U),$ and show that
if $U$ has two elements which overlap, we can obtain
a set $V$ inverse-equivalent to $U,$ but having lower weight.
Thus, starting with $S,$ this process must terminate in a set $T$
inverse-equivalent to $S$ which is overlap-free.

For any $w\in \langl X\rangl,$ let
$\mathrm{lgth}(w)$ denote its length
as a word in the elements of $X,$ and let
\begin{equation}\begin{minipage}[c]{35pc}\label{d.wt=}
$\mathrm{wt}(U) \ = \ \sum_{w\in U}\,(\r{lgth}(w)-1).$
\end{minipage}\end{equation}

Suppose first that $U$ contains two distinct
elements, one of which left-divides the other: $u$ and $uv.$
Then by Lemma~\ref{L.delete_one},
$U\cup\{v\}$ and $U$ are inverse-equivalent, while by another
application of that Lemma, $U\cup \{v\}$ is inverse-equivalent
to $U\cup\{v\}\setminus\{uv\};$
so $U$ is inverse-equivalent to $U\cup \{v\} \setminus\{uv\}.$
Now adjoining $\{v\}$ increased the weight of $U$
by at most $\mathrm{lgth}(v)-1$ (``at most'' because $v$ might
already have belonged to $U,$ in which case bringing it in
left $U$ unchanged); and deleting $uv$ decreased
that weight by the larger value $\mathrm{lgth}(uv)-1.$
So the combined changes decrease the weight.

Similarly, if $U$ contains two distinct elements one of which
{\em right}-divides the other, then the left-right dual of the
above construction decreases its weight.

Finally, if $U$ contains elements $ab$ and $bc$
(not necessarily distinct) which
overlap noninclusively, then by Lemma~\ref{L.left-div_rt-div},
$U\cup\{b\}$ is inverse-equivalent to $U,$ and after
adjoining $b$ to $U,$ we can then replace $ab$ by $a.$
These steps together show that $U$ is inverse-equivalent to
$(U\setminus\{ab\})\cup\{a,\,b\};$ and again, this decreases
the weight of $U$ by at least~$1.$
(Why was the last step above just to
replace $ab$ by $a,$ and not simultaneously replace $bc$ by $c$?
That works nicely if $ab$ and $bc$ are distinct, but if they are equal,
it requires us to consider various subcases.
To avoid these complications, I chose to just delete
one of these products in this step.
If removing $ab$ still leaves $bc$ in our set, then at a subsequent
step we can handle this as in the above
``one left-divides the other'' case.)

Since the initial value of $\mathrm{wt}(S)$ was a positive
integer, the above process must terminate after
finitely many steps, giving a $T$
inverse-equivalent to $S$ which cannot be
so modified, i.e., which is overlap-free.
\end{proof}

The above construction involves repeated choices of
which elements to add to and remove from $U,$ and so does
not show $T$ to be uniquely determined by $S.$
But we will be able to prove that uniqueness
after getting some further results.

These intermediate results, which will also be used
in \S\ref{S.inf}, will not be restricted to finite sets~$T.$

The next result makes use of a tool for establishing normal forms,
the Diamond Lemma~\cite{<>}.
Though the title of~\cite{<>} refers to ring theory,
\S9.1 thereof notes the version of the result
for monoids, there called ``semigroups''.
(A similar procedure for establishing normal form results,
often used in semigroup and monoid theory, is called
``Knuth-Bendix reduction''~\cite[\S12.2]{H+E+O}.
As discussed at~\cite[p.\,179]{<>}, the term ``Diamond Lemma'' goes
back to a graph-theoretic result of M.\,H.\,A.\,Newman~\cite{Newman}.)

\begin{proposition}\label{P.nml_form}
Let $X$ be a set, and $T$ an {\em overlap-free} subset
of $\langl X\rangl\setminus\{1\}.$
Then every element
of $\langl X:\,\I(T)\rangl$ can be written uniquely as
the product of a {\rm(}possibly empty{\rm)} string of elements
of $X\cup\,\I(T)$ in which no element $\I(w)$ $(w\in T)$ is
immediately preceded or followed by the corresponding string $w.$
\end{proposition}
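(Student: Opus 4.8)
The plan is to apply the Diamond Lemma (Bergman's, from \cite{<>}) to a monoid presentation of $\langl X:\I(T)\rangl$ in which the new letters are symbols $t^+$ for each $t\in T$ (my stand-ins for $\I(t)$). Concretely, I would present $\langl X:\I(T)\rangl$ on the alphabet $X\sqcup\{t^+ : t\in T\}$, with relations, for each $t\in T$ written as a word $t = x_1\cdots x_k$ in the letters of $X$, of the form $x_1\cdots x_k\,t^+ \to 1$ and $t^+\,x_1\cdots x_k \to 1$. The claimed normal forms are exactly the words over $X\sqcup\{t^+\}$ containing no subword of the form ``$t$ followed by $t^+$'' or ``$t^+$ followed by $t$''; so the reduction system should be chosen to rewrite precisely those forbidden subwords away. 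I would take as the reduction rules $t\,t^+ \to 1$ and $t^+\,t \to 1$ (reading $t$ as the word $x_1\cdots x_k$), one pair for each $t\in T$. First I would check that the submonoid generated by these relations in the free monoid on $X\sqcup\{t^+\}$ is the congruence defining $\langl X:\I(T)\rangl$ — this is routine, since modulo these relations each $t^+$ is a two-sided inverse of $t$, and conversely the universal property of $\langl X:\I(T)\rangl$ gives the map back.

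The heart of the matter is verifying that this reduction system is \emph{terminating} and that all \emph{ambiguities resolve}, after which the Diamond Lemma delivers the irreducible words (= the asserted normal forms) as a $\Z$-basis, equivalently a set of unique representatives. Termination is easy: each rule strictly decreases word length, so there is no infinite descending chain; one can use length as the partial order required by the Diamond Lemma. For confluence I would enumerate the overlap ambiguities between left-hand sides. The left-hand sides are $t\,t^+$ and $t^+\,t$ for $t\in T$ (again, $t = x_1\cdots x_k$). An overlap ambiguity arises when the right end of one left-hand side coincides with the left end of another. The key point — and this is where \emph{overlap-freeness of $T$} does all the work — is that two words $t, t'\in T$ (equal or not) never overlap, and no $t\in T$ is self-overlapping; consequently the only way a suffix of one rule's left-hand side can equal a prefix of another's is when they share the single letter $t^+$ (which occurs on one side in $t\,t^+$ and on the other in $t^+\,t$, same $t$), or when one $X$-word $t$ sits as a prefix/suffix of another — and overlap-freeness forbids all the genuinely problematic configurations, leaving only the ``inclusion'' cases where $t$ equals $t'$ outright or one is a left/right divisor of the other; but a proper divisor of an element of $T$ is \emph{not} in $T$ (else it would overlap), so in fact $t = t'$. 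I would therefore argue: the only overlap ambiguities are $t\,t^+\,t$ (between $t t^+\to 1$ and $t^+ t\to 1$ — wait, these share $t^+$ only if we align $t\,(t^+)$ with $(t^+)\,t$), yielding the words $t\,t^+\,t$ and $t^+\,t\,t^+$ for each $t\in T$. The former reduces via the first rule to $t$ and via the second rule to $t$ (since $t\,t^+\to 1$ turns $t t^+ t$ into $t$, and $t^+ t \to 1$ turns $t\,t^+\,t$... actually $t(t^+ t)\to t$) — both give $t$, so it resolves. Symmetrically $t^+ t t^+$ resolves to $t^+$ both ways. There are no inclusion ambiguities for a nontrivial reason I would spell out: a left-hand side $t\,t^+$ or $t^+\,t$ contains a proper subword equal to another left-hand side only if some $t'\in T$ is a subword of the $X$-part $t$, which by overlap-freeness forces $t' = t$ and hence is not proper. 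Hence every ambiguity resolves, and the Diamond Lemma applies.

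With confluence and termination in hand, the Diamond Lemma (\cite[\S9.1]{<>}) asserts that each congruence class contains a unique irreducible element, and the irreducible elements form a set of normal forms for the monoid — and, by construction, the irreducible words are precisely the strings over $X\cup\{t^+ : t\in T\}$ with no occurrence of $t\,t^+$ or $t^+\,t$, which is exactly the statement to be proved once we translate $t^+$ back to $\I(t)$ and note that the hypothesis ``no $\I(w)$ is immediately preceded or followed by the string $w$'' is the same as forbidding $w\,\I(w)$ and $\I(w)\,w$.

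The main obstacle, then, is purely the ambiguity analysis — making airtight the claim that overlap-freeness of $T$ (in the precise sense of Definition~\ref{D.main}, including the prohibition on self-overlap and the ``distinct or not'' clause) rules out every overlap ambiguity except the benign ones $t\,t^+\,t$ and $t^+\,t\,t^+$ that resolve trivially, and rules out every inclusion ambiguity because a proper divisor of a $T$-element cannot lie in $T$. Once that is nailed down the rest is bookkeeping. I would also remark that overlap-freeness is genuinely needed here and not an artifact of the method: without it, a self-overlapping $w$ like $x\,y\,x$ produces the ambiguity $w\,\I(w)\,w\,\I(w)$-type tangles (or, more to the point, $x y x$ as both a prefix and suffix overlap of itself creates critical pairs that do \emph{not} resolve inside this rewriting system), which is exactly why the previous proposition first replaces an arbitrary finite $S$ by an overlap-free $T$.
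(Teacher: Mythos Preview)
Your proposal is correct and follows essentially the same route as the paper: both apply the Diamond Lemma to the reduction system $\{(w\,\I(w),1),\ (\I(w)\,w,1): w\in T\}$ and verify that overlap-freeness of $T$ leaves only the trivially resolvable ambiguities $w\,\I(w)\,w$ and $\I(w)\,w\,\I(w)$. Your treatment is actually a bit more explicit than the paper's in discussing termination and inclusion ambiguities, though your stated reason for the absence of inclusion ambiguities is slightly off --- the point is not that overlap-freeness forbids $t'$ from being a subword of the $X$-part of $t$ (it doesn't: internal subwords are allowed, as the paper notes after Proposition~\ref{P.T=mnl} with the example $\{xyz,\,y\}$), but simply that every left-hand side contains exactly one fresh letter $\I(t)$, located at an end, so no left-hand side can sit properly inside another.
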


\begin{proof}
That every element of $\langl X :\,\I(T)\rangl$ can be written as
such a product is immediate even without the assumption that $T$
is overlap-free.
Indeed, by definition of
$\langl X:\,\I(T)\rangl,$ every element thereof is a product
of elements of $X\cup\,\I(T).$
If such a product contains an $\I(w)$
immediately preceded or followed by $w,$
then the product obtained by dropping the substring
$w\,\I(w)$ or $\I(w)\,w$ represents the same element,
and is shorter.
Repeating such rewritings, we must eventually get an expression
of the desired sort.

To show that such representations are unique, we
use the Diamond Lemma~\cite{<>}.
Given $X$ and $T,$ the defining identities for
$\langl X:\,\I(T)\rangl$ yield the reduction system
consisting of the reductions
\begin{equation}\begin{minipage}[c]{35pc}\label{d.redns}
$(w\,\,\I(w),~1)$ \ and \ $(\,\I(w)\,w,~1)$ \ for all \ $w\in T.$
\end{minipage}\end{equation}

Note that each generator of
the form $\I(w)$ $(w\in T)$ appears at the end
of just one of the $2\,\r{card}(T)$
reducible strings shown in~\eqref{d.redns},
and at the beginning of just one such string,
and nowhere else.
Hence the only overlaps among such strings in which
the overlapping substrings involve an $\I(w)$ correspond to the
words $w\,\,\I(w)\,w,$ and it is clear that the two ways
of reducing such a word give the same value, $w.$
So in the language of~\cite{<>}, the ambiguities
whose overlapping portions involve an $\I(w)$ are resolvable.

What about overlaps involving the strings $w$?
By assumption, no two members of $T,$ distinct or
not, overlap; this leaves only
the case where the right-hand $w$ of a product $\I(w)\,w$
is equal to the left-hand $w$ of the product $w\,\I(w),$
and the overlapping subword is in the whole string $w.$
In that case, the ambiguously reducible product is
$\I(w)\,w\,\,\I(w),$ and both reductions yield $\I(w).$

So all ambiguities of our reduction-system are resolvable, hence
the result of~\cite{<>} for monoids shows that each element
of $\langl X:\,\I(T)\rangl$ has a unique reduced expression;
i.e., a unique expression of the asserted form.
\end{proof}

We next want to prove that the {\em invertible} elements of
$\langl X:\,\I(T)\rangl$ are just the obvious ones,
and so, in particular, obtain a simple description
of which elements of the original monoid $\langl X\rangl$
become invertible in \mbox{$\langl X:\,\I(T)\rangl.$}
Curiously, the route to these results requires
us to first determine which elements become
$\!1\!$-sided invertible.

\begin{proposition}\label{P.rt-or-left-inv}
Let $X$ be a set, and $T$ an overlap-free subset
of $\langl X\rangl\setminus\{1\}.$
Then the right-invertible
elements of $\langl X:\,\I(T)\rangl$ are precisely the elements
which, when written in the normal form
of Proposition~\ref{P.nml_form}, consist of products
in any order {\rm(}including the empty product~$1)$
of elements of $\I(T)$ and nonempty {\em initial} substrings
{\rm(}not necessarily proper{\rm)} of the expressions
in $\langle X\rangl$ for elements of $T.$

In particular, the elements
of the free monoid $\langl X\rangl$ that become right-invertible in
$\langl X:\,\I(T)\rangl$ are the products of
nonempty initial substrings of the expressions
in $\langl X\rangl$ for members of $T.$

The elements of $\langl X:\,\I(T)\rangl$ and $\langl X\rangl$ that
are {\em left}-invertible in $\langl X:\,\I(T)\rangl$
are described by the left-right duals of the above characterizations.
\end{proposition}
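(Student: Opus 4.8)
\emph{Proof proposal.} The plan is to prove, for every normal-form word $W$ over the alphabet $X\cup\I(T)$ (in the sense of Proposition~\ref{P.nml_form}), the equivalence of the following three conditions, of which the third is the one asserted: (a) $W$ is the normal form of a right-invertible element of $\langl X:\I(T)\rangl$; (b) $W$ is an initial substring of some (not necessarily reduced) word $V$ over $X\cup\I(T)$ that reduces, under the reduction system~\eqref{d.redns}, to the empty word; (c) $W$ can be written as a concatenation $B_1\cdots B_k$ ($k\ge 0$) in which each $B_i$ is either a single letter $\I(w)$ ($w\in T$) or a nonempty initial substring of the $\langl X\rangl$-expression of some member of $T$.

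Two of the implications are quick. For (c)$\Rightarrow$(a): each $\I(w)$ is invertible in $\langl X:\I(T)\rangl$; each nonempty initial substring $a$ of a member $w=ac\in T$ is right-invertible because $a\cdot(c\,\I(w))=w\,\I(w)=1$; and a product of right-invertible elements is right-invertible; so $[W]=[B_1]\cdots[B_k]$ is right-invertible, and $W$, being reduced, is indeed its normal form. For (a)$\Rightarrow$(b): if $g$ is right-invertible, say $gh=1$, then the word $W\cdot(\text{normal form of }h)$ represents $1$ in $\langl X:\I(T)\rangl$, hence reduces to the empty word by the confluence of~\eqref{d.redns} established in Proposition~\ref{P.nml_form}; take $V$ to be that word.

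The substantive step is (b)$\Rightarrow$(c), which I would prove by induction on the length of $V$. If $W$ is empty, (c) holds with $k=0$. Otherwise write $V=WW'$. Were $W'$ empty we would have $V=W$, a reduced nonempty word, contradicting that $V$ reduces to the empty word; so $W'$ is nonempty and $V$ is not reduced, hence contains a reducible occurrence $O$, equal to $w\,\I(w)$ or to $\I(w)\,w$ for some $w\in T$ (a subword of length $\ge 2$, since $w\ne 1$; so every reduction strictly shortens the word). Since $W$ is reduced, $O$ is not contained in the prefix $W$, leaving three cases. (i) $O$ lies inside $W'$: performing this reduction yields a strictly shorter word $V_1=WW_1'$ still having $W$ as prefix and still reducing to the empty word (confluence), and induction gives (c) for $W$. (ii) $O=w\,\I(w)$ and $O$ meets both $W$ and $W'$: its last letter $\I(w)$ must then lie in $W'$ (otherwise $O\subseteq W$), so the $X$-portion $w$ of $O$ starts inside $W$ and, read left to right, its part lying in $W$ is a nonempty \emph{initial} substring $w'$ of $w$; writing $W=W_0\,w'$, the reduction produces a strictly shorter word reducing to the empty word with $W_0$ as prefix, and by induction $W_0$ satisfies (c), so appending the block $w'$ gives (c) for $W$. (iii) $O=\I(w)\,w$ and $O$ meets both $W$ and $W'$: dually, its first letter $\I(w)$ must lie in $W$, and the part of $W$ following it is a (possibly empty) initial substring $w''$ of $w$; writing $W=W_0\cdot\I(w)\cdot w''$, the reduction produces a strictly shorter word reducing to the empty word with $W_0$ as prefix, and by induction $W_0$ satisfies (c), so appending the blocks $\I(w)$ and, if $w''\ne 1$, $w''$, gives (c) for $W$.

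Granting the equivalence, the Proposition's first assertion is condition (c), and its second follows because an element of $\langl X\rangl$ is already reduced as a word over $X\cup\I(T)$, so no letter $\I(w)$ can occur in its decomposition (c); the statements about left-invertible elements follow by applying everything to the monoid obtained by reversing all words, whose distinguished set $\{\tilde w:w\in T\}$ (with $\tilde w$ the reversal of $w$) is again overlap-free, overlapping being symmetric under reversal. The step I expect to need the most care is the bookkeeping in cases (ii) and (iii): since the two rules of~\eqref{d.redns} are oppositely oriented, one must keep straight that in case (ii) a \emph{prefix} of the displayed $w$ coincides with a suffix of $W$, whereas in case (iii) it is a \emph{suffix} of $W$ (namely $w''$) that is a prefix of $w$, with the letter $\I(w)$ sitting between $W_0$ and $w''$; conflating these would weaken ``initial substring'' to ``arbitrary factor'' and break the induction. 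A minor point is that the induction must be on $|V|$ — legitimate because every reduction strictly shortens — which is what allows one to choose freely which reducible occurrence to contract, by confluence.
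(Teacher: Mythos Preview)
Your argument is correct, and its core move matches the paper's: locate a reduction straddling the boundary between $W$ and its right multiplier, peel the resulting block ($w'$, or $\I(w)$ followed by $w''$) off the right end of $W$, and recurse on the shorter prefix $W_0$.

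The one genuine difference is in the induction scaffolding. The paper inducts on the length of the normal form of the right-invertible element $u$ itself; having written $u=u'w_1$ or $u=u'\,\I(w)\,w_1$, it invokes the algebraic observation~\eqref{d.div-1-sided_inv} that a left divisor of a right-invertible element is right-invertible, and applies the inductive hypothesis directly to $u'$. You instead interpose the combinatorial condition~(b) and induct on $|V|$, so that the recursive claim about $W_0$ is ``$W_0$ is a prefix of a shorter word reducing to $1$'' rather than ``$W_0$ is right-invertible''. This buys you a purely rewriting-theoretic argument that never appeals to~\eqref{d.div-1-sided_inv}, at the cost of the extra case~(i) (reduction entirely inside $W'$). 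Incidentally, in your actual application both $W$ and $W'=\text{normal form of }h$ start out reduced, and this is preserved at each step (each new $W_0$, $V'$ is a prefix or suffix of a reduced word), so case~(i) never fires; it is needed only because you stated~(b) for arbitrary $V$.
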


\begin{proof}
By symmetry it suffices to prove the assertions
on right-invertible elements.

So suppose $u,\,v\in \langl X:\,\I(T)\rangl$ satisfy $uv=1.$
Assume inductively that all right-invertible words
shorter than $u$ (i.e., having shorter normal
forms in terms of the generating set $X\cup\,\I(T))$
are products of the asserted form.

Write $u$ and $v$ in normal form.
If they are not both $1,$ then the string $u\,v$ must
not be in normal form, so the reducibility of $u\,v$ must result
from some nonempty final substring of $u,$ when adjoined
to some initial substring of $v,$ giving an expression $w\,\,\I(w)$
or $\I(w)\,w$ for some $w\in T.$
(If both of these situations occurred
simultaneously, i.e., if when the expressions
for $u$ and $v$ were adjoined, the result contained
a substring $\I(w)\,w\,\,\I(w)$ or $w\,\,\I(w)\,w,$
one could arbitrarily use one of the substrings
$\I(w)\,w$ or $w\,\,\I(w)$ thereof in the argument to follow.
But in fact, as we shall note after the proof, this cannot happen.)

If a substring $w\,\I(w)$ arises in this way, that
means the expressions for $u$ and $v$ have the forms
\begin{equation}\begin{minipage}[c]{35pc}\label{d.u=,v=}
$u\,=\,u'\,w_1,$ \ $v\,=\,w_2\,\I(w)\,v',$ \ where \ $w_1\,w_2\,=\,w.$
\end{minipage}\end{equation}
Here the string $w_1$ has to be nonempty, since otherwise the
expression for $v$ would begin $w\,\I(w),$ contradicting its
being in normal form.
Note also that $u',$ as a left divisor of the right-invertible
element $u,$ is right invertible (by~\eqref{d.div-1-sided_inv}), so
by our inductive hypothesis, $u'$ is a product of the asserted form.
Hence, since $w_1$ is an initial substring of $w\in T,$
we see that $u=u'\,w_1$ is also a product of the asserted form.

On the other hand, if the substring that makes $u\,v$ reducible
has the form $\I(w)\,w,$ then the normal forms for $u$ and $v$
must have the forms $u'\,\I(w)\,w_1$ and $w_2\,v',$ where
again $w_1\,w_2=w,$ making $w_1$ an initial substring
(possibly empty) of a member of $T.$
Again, $u',$ as a left divisor of a right-invertible
element, is right invertible, so
again applying our inductive hypothesis to $u',$
we get the desired conclusion for $u=u'\,\I(w)\,w_1.$

The assertion beginning ``In particular'' clearly follows.
\end{proof}

We remarked parenthetically that the situation where two elements
$u,$ $v$ with $u\,v=1$ have normal forms that, when
put together, contain
a string $\I(w)\,w\,\,\I(w)$ or $w\,\,\I(w)\,w$ cannot actually occur.
Indeed, if $u=u'\,\,\I(w)\,w_1$ and $v=w_2\,\,\I(w)\,v'$
where $w_1\,w_2=w,$ then the
product $u'\,\,\I(w)\,w_1\,w_2\,\,\I(w)\,v'=u'\,\,\I(w)\,w\,\,\I(w)\,v'$
would reduce to $u'\,\I(w)\,v'.$
For this to reduce to $1,$ it would in particular have
to be reducible, so either $u'$ would have to end
in $w,$ or $v'$ would have to begin with $w;$ but either
of these properties would contradict the assumption that
$u$ and $v$ were in normal form.
The case where putting together the normal forms
of $u$ and $v$ gave an expression containing $w\,\,\I(w)\,w$
is still simpler: if the indicated
occurrence of $\I(w)$ came from the expression for $u,$ then that
expression would not have been in normal form, and dually
if it came from the expression for $v.$

We remark that if an element of a
monoid $\langl X:\,\I(T)\rangl$ has a $\!1\!$-sided
inverse, that $\!1\!$-sided inverse need not be unique.
E.g., in $\langl x,\,y,\,z:\,\I(xy),\,\,\I(xz)\rangl,$
the element $x$ has distinct right inverses
with normal forms $y\,\,\I(xy)$ and $z\,\,\I(xz).$

Back to general results.
We can now characterize $\!2\!$-sided invertible elements:

\begin{proposition}\label{P.inv}
Let $X$ be a set, and $T$ an overlap-free subset
of $\langl X\rangl\setminus\{1\}.$
Then the invertible elements
of $\langl X:\,\I(T)\rangl$ are precisely those which can be
written as products of elements of~$T\cup\,\I(T).$
The normal form {\rm(}as in Proposition~\ref{P.nml_form}{\rm)}
of each such element can be written uniquely as such a product
which contains no substring $w\,\I(w)$ or $\I(w)\,w$~$(w\in T).$

In particular, the elements of $\langl X\rangl$ that are invertible in
$\langl X:\,\I(T)\rangl$ are precisely the products of
elements of~$T,$ and each has a unique
expression as such a product.

Thus, the monoid of all elements of $\langl X\rangl$
with inverses in $\langl X:\,\I(T)\rangl$
is the free monoid on $T,$ and the group of all invertible elements
of $\langl X:\,\I(T)\rangl$ is the free group on~$T.$
\end{proposition}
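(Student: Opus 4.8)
The plan is to describe the invertible elements as exactly those that are both right-invertible and left-invertible, and to combine the two descriptions supplied by Proposition~\ref{P.rt-or-left-inv}. One inclusion is trivial: a product of elements of $T\cup\I(T)$ is a product of invertible elements of $\langl X:\,\I(T)\rangl$ — each $w\in T$ is invertible there by the defining property of $\langl X:\,\I(T)\rangl,$ with inverse $\I(w)$ — and hence is invertible. For the reverse inclusion, let $u\in\langl X:\,\I(T)\rangl$ be invertible and write it in the normal form of Proposition~\ref{P.nml_form}. The occurrences of letters $\I(w)$ $(w\in T)$ in this normal form sit at fixed positions, cutting it into those letters together with the maximal runs of letters of $X$ between and around them. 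Since $u$ is right-invertible, Proposition~\ref{P.rt-or-left-inv} tells us that each such $X$-run is a product of nonempty initial substrings of members of $T,$ and since $u$ is left-invertible, the same proposition tells us it is also a product of nonempty terminal substrings of members of $T.$

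Everything then comes down to the following combinatorial assertion about $\langl X\rangl,$ which is the one place where overlap-freeness is used essentially: \emph{if $T\subseteq\langl X\rangl\setminus\{1\}$ is overlap-free and $\beta\in\langl X\rangl$ is at once a product of nonempty initial substrings of members of $T$ and a product of nonempty terminal substrings of members of $T,$ then $\beta$ is a product of members of $T$} (uniqueness of such a product being automatic, since overlap-freeness makes $T$ prefix-free, so that every word has at most one factorization into members of $T$). Granting this, each $X$-run in the normal form of $u$ is a product of members of $T,$ so the whole normal form is a product of members of $T$ and of $\I(T),$ which gives the first assertion. For the uniqueness clause of the proposition, observe that the normal-form condition of Proposition~\ref{P.nml_form} already forbids a block equal to the string $w$ $(w\in T)$ from standing immediately next to the letter $\I(w);$ hence \emph{any} way of writing the normal form as a product of members of $T\cup\I(T)$ automatically avoids the substrings $w\,\I(w)$ and $\I(w)\,w,$ and such a factorization is unique, since the positions of the letters $\I(w)$ are forced and each intervening $X$-run factors into members of $T$ in just one way. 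Specializing to $\beta\in\langl X\rangl,$ whose own normal form is $\beta,$ gives the ``in particular'' statement; and the set of such $\beta$ is closed under multiplication, so by that uniqueness it is the free monoid on $T.$

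To prove the combinatorial assertion I would induct on $\r{lgth}(\beta),$ peeling a member of $T$ off the left. Writing $\beta=c_1\cdots c_m$ with $c_i$ a nonempty initial substring of $t_i\in T,$ and $\beta=d_1\cdots d_n$ with $d_j$ a nonempty terminal substring of $t'_j\in T,$ the words $c_1$ and $d_1$ are both initial substrings of $\beta,$ so one is an initial substring of the other; a case analysis — exploiting that in an overlap-free set no element is an initial or terminal substring of another (which would be an inclusive overlap) and no two elements overlap noninclusively, and keeping track of the cases where the relevant initial or terminal substrings of $\beta$ straddle several of the $c_i$ or $d_j$ — forces $c_1=d_1$ to be a full member of $T.$ Deleting it from both factorizations shortens $\beta$ while preserving the hypotheses, and the induction closes. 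I expect this case analysis to be the main obstacle; that the conclusion really needs overlap-freeness is shown by $T=\{x\,y,\,y\,z\},$ for which $x\,y\,z$ is both a product of initial substrings and a product of terminal substrings of members of $T$ but is not a product of members of $T$ (whereas $T=\{x\,y,\,x\,z\}$ is overlap-free and causes no trouble).

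Finally, for the group $G$ of all invertible elements of $\langl X:\,\I(T)\rangl$: the first assertion exhibits $T\cup\I(T)$ as a generating set of $G$ as a group, so the free group $F(T)$ maps onto $G$ by sending each free generator to the corresponding member of $T.$ To see this map is injective, I would take a nonempty reduced word of $F(T),$ replace each free generator by the corresponding string in $X$ and each inverse of a free generator by the corresponding letter $\I(w),$ and verify — again using that $T$ is prefix-free and suffix-free — that no reduction can be applied at any junction of the resulting word in $X\cup\I(T),$ so that it is already in the normal form of Proposition~\ref{P.nml_form} and is distinct from $1;$ distinct reduced words thus have distinct normal forms, and the map is an isomorphism of $F(T)$ onto $G.$
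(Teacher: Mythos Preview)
Your proposal is correct, and the combinatorial assertion you isolate is true; the case analysis you anticipate does close (one shows that the case $|c_1|<|d_1|$ is impossible by locating the suffix $c_k'$ of $d_1$ that lies inside the block $c_k$, observing it is simultaneously a prefix of $t_k$ and a suffix of $t'_1$, and deriving a length contradiction from overlap-freeness).

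However, your route differs from the paper's in the main step. You first strip off the $\I(T)$-letters, reducing to a purely word-combinatorial lemma in $\langl X\rangl$ about the $X$-runs, and then prove that lemma by a prefix/suffix case analysis. The paper instead inducts on the normal form of the whole invertible element $u$: taking the decomposition $u=u_1\cdots u_n$ from Proposition~\ref{P.rt-or-left-inv}, it observes that the final factor $u_n$ is right-invertible by construction \emph{and} left-invertible as a right divisor of the invertible $u$, hence invertible; so both $u_1\cdots u_{n-1}$ and $u_n$ are invertible and strictly shorter, and induction finishes. The only place overlap-freeness is invoked directly is the base case $n=n'=1$, where $u$ is a single prefix and a single suffix of members of $T$, forcing $u\in T$. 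The paper's argument is shorter and avoids a separate lemma; your argument makes the underlying word combinatorics explicit and is self-contained within $\langl X\rangl$, at the cost of the case analysis you flagged. Your treatments of uniqueness and of the free-group assertion are essentially the same as the paper's, though spelled out in more detail.
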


\begin{proof}
We shall first show that every invertible element
has an expression as in
the first paragraph above, then that such expressions are unique.
These results clearly imply the remaining assertions.

Let $u$ be an invertible element of $\langl X:\,\I(T)\rangl.$
If $u=1,$ then $u$ is the empty product of members of~$T\cup\,\I(T),$
so let $u\neq 1,$ and assume inductively that
all invertible elements having shorter normal forms
than that of $u$
are products of elements of~$T\cup\,\I(T).$

Since $u$ is right invertible, Proposition~\ref{P.rt-or-left-inv}
says that it can be written $u=u_1\dots u_n$ where all
$u_i$ are either nonempty left substrings of elements of $T$
(hence right invertible) or members of $\I(T)$ (hence invertible);
and since it is left invertible, that result likewise tells
us that it can be written $u=u'_1\dots u'_{n'}$ where all
$u'_i$ are either right factors of elements of $T$
or members of $\I(T).$

Let us first consider the case where $n=n'=1.$
Then $u$ is either a member of $\I(T),$
or is both a left substring of a member
of $T$ and a right substring of a member of $T.$
The former case clearly satisfies our desired conclusion,
while in the latter, the condition that $T$ be overlap-free
tells us that $u\in T,$ so that conclusion again holds.

It remains to consider the case where $n$ and $n'$ are not both~$1.$
By left-right symmetry, it suffices to assume $n>1,$
so that $u$ is a product $u_1\dots u_n$ of $n>1$ elements
$u_i,$ each of which is either a member of $\I(T),$
or a nontrivial left factor of a member of~$T.$

In this case, note that by assumption, the final
factor $u_n$ is right invertible; but being a right factor
of the invertible element $u,$ it is also left invertible.
Thus, $u_n$ is invertible; hence both of the factors
in the decomposition $u=(u_1\dots u_{n-1})\,u_n$ are
invertible elements of shorter normal form than $u.$
Hence by our inductive assumption, they are both products
of members of~$T\cup\,\I(T);$ hence so is $u,$ as desired.

As for the uniqueness of the decomposition
as a product of members of~$T\cup\,\I(T),$
suppose the normal form for $u$ has two such decompositions,
$u=u_1\dots u_n =u'_1\dots u'_{n'},$ and
assume inductively that for invertible
elements with shorter normal forms, the desired uniqueness holds.
If the normal form for $u$ begins with an element of $\I(T),$
then each of our decompositions must have
that as its first factor, and the result of deleting that factor
gives $u_2\dots u_n =u'_2\dots u'_{n'},$ so
our inductive assumption leads to the desired conclusion.
If the normal form for $u$ does not begin with an element
of $\I(T),$ then the initial substrings $u_1$ and $u'_1$
will both be members of $T,$ one of which is a left substring
of the other.
But since $T$ is assumed non-overlapping,
no member of $T$ can be a proper left substring of another, so
we again get $u_1=u'_1,$ and conclude as before that the
decompositions are the same.
\end{proof}

We can now, at last, get the uniqueness of the $T$
constructed from an arbitrary finite $S$
in Proposition~\ref{P.oE_T}.
Indeed, the above result clearly implies

\begin{proposition}\label{P.T=mnl}
Let $X$ be a set, and $T$ an overlap-free subset
of $\langl X\rangl\setminus\{1\}.$
Then $T$ is precisely the set of nonidentity
elements of $\langl X\rangl$ which are invertible
in $\langl X:\I(T)\rangl,$ but have no
factorization into two nonidentity elements of $\langl X\rangl$
that are invertible in $\langl X:\I(T)\rangl.$

Hence if $S$ is any finite subset of $\langl X\rangl,$
and $T$ is an overlap-free subset inverse-equivalent to $S$
{\rm(}shown to exist by Proposition~\ref{P.oE_T}{\rm)},
then $T$ is uniquely determined by $S{\,:}$ it is the
set of nonidentity elements of $\langl X\rangl$ that are invertible
in $\langl X:\I(S)\rangl$ but have no factorization into two
nonidentity elements of $\langl X\rangl$
invertible in $\langl X:\I(S)\rangl.$\qed
\end{proposition}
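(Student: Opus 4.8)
The plan is to read both assertions off Proposition~\ref{P.inv}, which already identifies the invertible elements of $\langl X:\I(T)\rangl$ coming from $\langl X\rangl$ as exactly the products of members of $T$, each with a unique such expression. For the first statement I would argue by double inclusion. If $w\in T$, then $w$ is a nonidentity element of $\langl X\rangl$ whose image in $\langl X:\I(T)\rangl$ is invertible by construction; and if one had $w=w_1w_2$ with $w_1,w_2$ nonidentity elements of $\langl X\rangl$, both invertible in $\langl X:\I(T)\rangl$, then by Proposition~\ref{P.inv} each $w_i$ would be a nonempty product of members of $T$, so $w$ would be a product of at least two members of $T$, contradicting the uniqueness (again Proposition~\ref{P.inv}) of the expression of $w$ as the one-term product $w$ of members of $T$. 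Hence $w$ lies in the set described in the proposition.

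Conversely, let $w$ be a nonidentity element of $\langl X\rangl$ invertible in $\langl X:\I(T)\rangl$ admitting no factorization of the excluded kind. By Proposition~\ref{P.inv}, $w=t_1\cdots t_k$ for (unique) $t_1,\dots,t_k\in T$, with $k\ge 1$ since $w\neq 1$. If $k\ge 2$, then $w=t_1\cdot(t_2\cdots t_k)$ exhibits a factorization of $w$ into two nonidentity elements of $\langl X\rangl$, each a product of members of $T$ and hence invertible in $\langl X:\I(T)\rangl$; this is the forbidden situation, so $k=1$ and $w=t_1\in T$. This establishes the first paragraph of the statement.

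For the ``Hence'' clause I would invoke Lemma~\ref{L.S_&_S'}: since $T$ is inverse-equivalent to $S$ there is an isomorphism $\langl X:\I(S)\rangl\cong\langl X:\I(T)\rangl$ commuting with the canonical maps from $\langl X\rangl$. Consequently an element $a$ of $\langl X\rangl$ has invertible image in $\langl X:\I(S)\rangl$ if and only if it has invertible image in $\langl X:\I(T)\rangl$ (the isomorphism carries one image to the other and preserves invertibility), while the property ``$w=w_1w_2$ with $w_1,w_2\neq 1$'' is intrinsic to $\langl X\rangl$. Hence the set of nonidentity elements of $\langl X\rangl$ that are invertible in $\langl X:\I(S)\rangl$ yet have no factorization into two nonidentity elements invertible there coincides with the analogous set for $\langl X:\I(T)\rangl$, which by the first paragraph is precisely $T$. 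In particular $T$ is determined by $S$ alone, independently of the choices made in the proof of Proposition~\ref{P.oE_T}.

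I do not anticipate a real obstacle here, since the substantive work was done in Proposition~\ref{P.inv}. The one point deserving a moment's care is the last one: one must check that ``invertible in $\langl X:\I(S)\rangl$'', regarded as a predicate on elements of the free monoid $\langl X\rangl$, is transported along the isomorphism of Lemma~\ref{L.S_&_S'}, notwithstanding that the canonical map $\langl X\rangl\to\langl X:\I(S)\rangl$ need not be injective (cf.~\eqref{d.same_symb}); but this is immediate from the fact that the isomorphism respects the maps from $\langl X\rangl$.
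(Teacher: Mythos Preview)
Your proposal is correct and matches the paper's approach: the paper simply says ``the above result clearly implies'' and marks the proposition with \qed, treating it as an immediate consequence of Proposition~\ref{P.inv}. You have spelled out in detail precisely the argument the paper leaves implicit, including the use of uniqueness of the $T$-factorization for the forward inclusion and the transport along the isomorphism of Lemma~\ref{L.S_&_S'} for the ``Hence'' clause.
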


Note that although the meaning given to ``overlap'' in
Definition~\ref{D.main} includes the case where one of the two words
is an initial or final subword of the other, it does
not include the case where one is merely an {\em internal}
subword of the other.
For instance, $x\,y\,z$ and $y$ do not count
as overlapping words;
so $\langl x,\,y,\,z:\,\I(xyz),~\I(y)\rangl$ is a presentation
$\langl X:\,\I(T)\rangl$ with overlap-free $T.$
Hence, in this monoid, $z\,\I(xyz)\,x\neq\I(y),$
though these elements fall together if multiplied on
the left by $x\,y,$ or on the right by $y\,z.$

\section{Some observations}\label{S.observations}

What does the monoid $\langl X:\I(T)\rangl$
of Proposition~\ref{P.inv} look like?

Clearly, if $T=X,$ it will be the free group on~$X.$
If, more generally, $T$ is a subset of~$X,$ then
it is easy to see that $\langl X:\I(T)\rangl$
will be the free product as monoids of the free group
on $T$ and the free monoid on $X\setminus T.$

On the other hand, if $T$ is not a subset of $X,$ i.e., if
$T$ contains at least one word $w$ of length $>1,$
then writing $w$ as a product $w=w_1\,w_2$ of nonempty words,
we see that $w_1$ will be right invertible but not
left invertible, with right inverse $w_2\,\I(w).$
Writing $x=w_1$ and $y= w_2\,\I(w),$
these elements will generate a bicyclic monoid
$\langl x,\,y~|~xy=1\rangl,$ since the relation
$xy=1$ allows us to reduce every product of $x$ and $y$
to the form $y^m\,x^n$ $(m,n\geq 0),$ while from
Proposition~\ref{P.nml_form} we see that no two
of these elements are equal.

Such monoids $\langl X:\I(T)\rangl$
will contain nonidentity idempotent elements,
since in a bicyclic monoid $\langl x,\,y~|~xy=1\rangl,$
$y\,x$ is such an element.
(However, I suspect that if $w$ is any element of a monoid
$\langl X:\I(T)\rangl$ which is {\em not} idempotent, then all the
positive integer powers of $w$ are distinct.)

The construction in Proposition~\ref{P.oE_T} of an
overlap-free set $T$ inverse-equivalent to a given finite
set $S$ tends to {\em increase} the number of elements in the set.
On the other hand, if one is not concerned with overlap-freeness,
one can easily {\em decrease} that number to~$1\!:$

\begin{lemma}\label{L.1-elt}
Let $X$ be a set, and $S$ a finite subset of $\langl X\rangl.$
Then there is an element $w\in \langl X\rangl$ such
that the singleton $\{w\}$ is inverse-equivalent to~$S.$
\end{lemma}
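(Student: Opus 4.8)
The plan is to write down a single word $w$ by hand and check the two halves of condition~(i) of Lemma~\ref{L.S_&_S'} directly. First I would reduce, as in the proof of Proposition~\ref{P.oE_T}, to the case $1\notin S$ (deleting $1$ from $S$ does not change its inverse-equivalence class); in the degenerate case $S=\emptyset$ one may take $w=1$. Otherwise, enumerating $S=\{s_1,\dots,s_n\}$ with $n\geq 1$ and each $s_i\neq 1$, I would set
\[
w \ = \ s_1\,s_2\cdots s_{n-1}\,s_n\,s_{n-1}\cdots s_2\,s_1 ,
\]
the ``palindromic'' product of $s_1,\dots,s_{n-1},s_n,s_{n-1},\dots,s_1$ in that order; this is a word in the elements of $X$, so $w\in\langl X\rangl$. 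The motivation for this choice is the observation recalled in \S\ref{S.intro} that invertibility of a product $x\,y\,x$ forces invertibility of $x$, and hence of $y$.

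The easy half of Lemma~\ref{L.S_&_S'}(i) is that every element of $S'=\{w\}$ is invertible in $\langl X:\I(S)\rangl$: indeed each $s_i$ is invertible there by construction, hence so is the product $w$. For the other half I must show that each $s_i$ becomes invertible in $\langl X:\I(w)\rangl$, and I would do this by ``peeling off'' the $s_i$ from the outside in, working inside the monoid $\langl X:\I(w)\rangl$ and identifying elements of $\langl X\rangl$ with their images there (per~\eqref{d.same_symb}). Since $w$ is invertible and $w=s_1\cdot\bigl(s_2\cdots s_n\cdots s_2\bigr)\cdot s_1$, the word $s_1$ is both a left divisor and a right divisor of the invertible element $w$, so by~\eqref{d.div-1-sided_inv} it is right- and left-invertible, hence invertible by~\eqref{d.rt+l}. (This first step is an instance of Lemma~\ref{L.left-div_rt-div}, but the subsequent steps are cleanest done directly.) Then $u_1:=s_1^{-1}\,w\,s_1^{-1}$ equals $s_2\,s_3\cdots s_n\cdots s_3\,s_2$ and, being a product of invertible elements, is invertible; and it is again a palindromic product, now with $s_2$ outermost. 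Iterating this peeling $n-1$ times shows $s_1,\dots,s_{n-1}$ invertible and leaves the innermost factor $s_n$ invertible as well. Thus every element of $S$ is invertible in $\langl X:\I(w)\rangl$, and Lemma~\ref{L.S_&_S'} together with Definition~\ref{D.inv-eq} gives that $\{w\}$ is inverse-equivalent to $S$.

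I do not anticipate any real obstacle: everything hinges on choosing $w$ to be the palindromic product, and once that is done the only work is the one-$s_i$-at-a-time peeling induction, which is routine given~\eqref{d.div-1-sided_inv} and~\eqref{d.rt+l}. The only bookkeeping nuisances are the degenerate cases $n=0$ and $n=1$ (where $w=1$ and $w=s_1$, respectively) and keeping the palindromic shape straight through the induction.
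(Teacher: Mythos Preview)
Your proof is correct and uses the same palindromic word $w=s_1\cdots s_{n-1}s_ns_{n-1}\cdots s_1$ as the paper, and the same outside-in peeling idea. The only difference is in packaging: the paper frames each peel as passing to an inverse-equivalent set via the noninclusive-overlap step from the proof of Proposition~\ref{P.oE_T} (i.e., Lemma~\ref{L.left-div_rt-div}), whereas you verify condition~(i) of Lemma~\ref{L.S_&_S'} directly inside $\langl X:\I(w)\rangl$ using~\eqref{d.div-1-sided_inv} and~\eqref{d.rt+l}; these are really the same argument, and your version is if anything slightly more self-contained.
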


\begin{proof}
If $S$ is empty, we can take $w=1,$ and if $S$ is
a singleton $\{w_1\}$ we can take $w=w_1,$ so assume
$S=\{w_1,\,\dots,\,w_n\}$ with $n>1,$ and let
\begin{equation}\begin{minipage}[c]{35pc}\label{d.w=}
$w \ = \ %
w_1\,\dots\,w_{n-1}\,w_n\,w_{n-1}\,\dots\,w_1.$
\end{minipage}\end{equation}
I claim that the technique used in
the last part of the proof of Proposition~\ref{P.oE_T},
applied repeatedly, shows $\{w\}$ inverse-equivalent to~$S.$

Indeed, note that $w$ overlaps itself
noninclusively via the initial and final factors $w_1.$
Hence as in the last part of the proof of Proposition~\ref{P.oE_T}
we can get an inverse-equivalent set by replacing the
singleton $\{w\}=\{w_1\dots w_{n-1} w_n w_{n-1}\dots w_1\}$
with the $\!2\!$-element set
$\{w_1,\ w_2\dots w_{n-1} w_n w_{n-1}\dots w_1\},$
which we can in turn replace with
$\{w_1,\ w_2\dots w_{n-1} w_n w_{n-1}\dots w_2\}.$
Iterating this process, we eventually get $\{w_1,\dots,w_n\}$
i.e., $S.$
(Of course, since we did not assume $S$ overlap-free,
there may be overlaps among the $w_i,$ which in the proof
of Proposition~\ref{P.oE_T} we would have
gotten rid of by further steps.
But our goal here isn't to get an overlap-free $T,$
so we only carry out the steps indicated above.)
\end{proof}

I will mention here a curious observation not directly related to the
subject of this paper.
It seems unlikely that it is not known; if someone points
out a reference, I will delete the result.

\begin{lemma}\label{L.bicyc}
If two elements $x$ and $y$ of a monoid $M$ satisfy $xy=1$
but $yx\neq 1,$ then
\begin{equation}\begin{minipage}[c]{35pc}\label{d.bicyc}
the map $M\to M$ given by $z\mapsto y\,z\,x$
\end{minipage}\end{equation}
is a one-to-one endomorphism of $M$ as a semigroup, but is
not a monoid homomorphism.
\end{lemma}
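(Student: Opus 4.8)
The plan is to verify, in order: (a) $z\mapsto yzx$ respects multiplication; (b) it is injective; (c) it does not fix $1$, so it is not a monoid homomorphism.

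For (a), given $z_1,z_2\in M$, I compute $(yz_1x)(yz_2x)=yz_1(xy)z_2x=yz_1z_2x$, using $xy=1$ in the middle. So the map is a semigroup endomorphism; this step is routine. For (c), the image of $1$ is $y1x=yx$, which by hypothesis is $\neq 1$, so the map does not send the identity of $M$ to the identity of $M$, hence is not a monoid homomorphism. (One should note that $M$ being a monoid, its identity is the unique two-sided identity, so a monoid homomorphism $M\to M$ would have to send $1$ to $1$; this is immediate.)

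The substantive step is (b), injectivity. Suppose $yz_1x=yz_2x$. I want to recover $z_1=z_2$, and the natural move is to multiply on the left by $x$ and on the right by $y$: $x(yz_1x)y = (xy)z_1(xy) = z_1$, and likewise the right-hand side collapses to $z_2$. So $z_1 = x(yz_1x)y = x(yz_2x)y = z_2$. This uses $xy=1$ twice and nothing else; the hypothesis $yx\neq 1$ plays no role here (it is only needed for the "not a monoid homomorphism" conclusion). So in fact injectivity is also routine once one notices that left-multiplication by $x$ and right-multiplication by $y$ invert the conjugation on the nose.

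There is really no hard part: every assertion reduces to a one-line manipulation with $xy=1$, together with the trivial observation that a monoid homomorphism must preserve the identity. If anything, the only thing worth spelling out for the reader is why $yzx\cdot yz'x$ simplifies as claimed (the cancellation $xy=1$ occurs at the \emph{interior} junction, not at the ends), and the remark that the map being a semigroup endomorphism that is injective but not identity-preserving is exactly what realizes $M$ as a proper subsemigroup of itself via conjugation by the non-invertible pair $(x,y)$ — which is the "curious observation" being advertised. I would present (a), (b), (c) as three short displayed computations and be done.
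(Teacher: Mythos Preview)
Your proof is correct and follows exactly the same route as the paper: the paper also treats the multiplicativity and the failure to preserve $1$ as immediate, and for injectivity it multiplies $yzx=yz'x$ on the left by $x$ and on the right by $y$ to recover $z=z'$. Your write-up simply spells out in more detail what the paper compresses into three lines.
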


\begin{proof}
That~\eqref{d.bicyc} respects multiplication, but does
not send $1$ to $1,$ is immediate.
Moreover, if $y\,z\,x=y\,z'\,x,$ then multiplying
by $x$ on the left and $y$ on the right, we get $z=z',$
so the map is one-to-one.
\end{proof}

The corresponding observation is also easily seen to hold for rings.

\section{Adjoining inverses to not necessarily finite families
of elements}\label{S.inf}

We begin this section with a result about a situation
far more general than the
one considered in Proposition~\ref{P.oE_T} and the second paragraph of
Proposition~\ref{P.T=mnl}, but with a necessarily
much weaker conclusion:

\begin{proposition}\label{P.gen_hm}
Let $X$ be a set, and $f$ a homomorphism from $\langl X\rangl$
to an arbitrary monoid~$M.$
Then the set
\begin{equation}\begin{minipage}[c]{35pc}\label{d.N}
$N\ =\ \{w\in \langl X\rangl~|~f(w)$ is invertible in $M\}$
\end{minipage}\end{equation}
is a {\em free} submonoid of $\langl X\rangl,$
whose {\rm(}unique{\rm)} free generating
set is an overlap-free subset $T\subseteq\langl X\rangl\setminus\{1\}.$

Conversely, every overlap-free subset
$T\subseteq\langl X\rangl\setminus\{1\}$
arises in this way from such a homomorphism; e.g.,
by the first assertion of Proposition~\ref{P.T=mnl}, from
the natural map $\langl X\rangl\to\langl X : \I(T)\rangl.$
\end{proposition}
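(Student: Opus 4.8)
The plan is to prove the first (main) assertion in two stages: first that $N$ is a submonoid closed under the relevant ``division'' operations, and second that such a submonoid of a free monoid is itself free on an overlap-free set. For the first stage, I would observe that $1 \in N$ trivially, and that $N$ is closed under multiplication since a product of invertible elements of $M$ is invertible. The key additional closure property is a \emph{unitary}-type condition: if $w \in N$ and $w = w_1 w_2$ with $w_1 \in N$, then $w_2 \in N$ — this follows from~\eqref{d.pull_unit_fr_1-sided} (or more directly, $f(w_2) = f(w_1)^{-1} f(w)$ is invertible), and dually if $w_2 \in N$ then $w_1 \in N$. So $N$ is a submonoid of $\langl X\rangl$ that is ``left-unitary'' and ``right-unitary'' in the standard sense.

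For the second stage, I would invoke (or reprove) the classical fact that every submonoid of a free monoid which is both left-unitary and right-unitary is itself free, with a uniquely determined free generating set $T$ consisting of those elements of $N\setminus\{1\}$ that admit no factorization into two shorter elements of $N\setminus\{1\}$. (This is a standard result in combinatorics of words; the one-sided version — left-unitary submonoids of free monoids are free — is already enough, and is often attributed to Schützenberger. One can also give a direct argument by induction on word length, exactly paralleling the uniqueness arguments in the proofs of Propositions~\ref{P.nml_form} and~\ref{P.inv}.) It then remains to check that this $T$ is \emph{overlap-free}. Here is where I expect the main work: suppose $ab$ and $bc$ lie in $T$ with $b \neq 1$. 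If $a = 1$ then $b$ and $bc$ both lie in $N$, so by right-unitarity $c \in N$; but then $bc$ factors as $b \cdot c$ inside $N$ with $b \neq 1$, and if $c \neq 1$ this contradicts $bc \in T$, while if $c = 1$ we have $b = bc \in T$ yet $b = ab$ is a proper left factor — one must rule out $b$ being a proper left divisor of another element of $T$, which again follows from unitarity plus the no-further-factorization definition of $T$. The genuinely noninclusive case ($a, c \neq 1$): from $ab, bc \in N$ and closure properties I would derive that $b \in N$ (e.g.\ $a \in N$ as a left factor of $ab$, whence $b \in N$ by left-unitarity), and then $a, c \in N$ as well; this makes $ab = a\cdot b$ and $bc = b \cdot c$ proper factorizations within $N\setminus\{1\}$, contradicting membership in $T$. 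So no overlaps occur, and $T$ is overlap-free.

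The converse is essentially immediate and I would dispose of it in a sentence: given an overlap-free $T$, take $M = \langl X:\I(T)\rangl$ and $f$ the canonical map; by the ``In particular'' clause of Proposition~\ref{P.inv}, the set of $w \in \langl X\rangl$ with $f(w)$ invertible is exactly the free monoid on $T$, so $N$ as defined in~\eqref{d.N} has free generating set $T$, as claimed.

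The main obstacle is the overlap-freeness verification, and specifically the bookkeeping needed to handle the \emph{inclusive} overlap cases (one of $ab$, $bc$ being an initial or final segment of the other) on the same footing as the noninclusive case — since ``overlap'' in Definition~\ref{D.main} deliberately covers both. I expect the cleanest route is to reduce everything to the single statement: \emph{if $p$ and $q$ are in $N$ with $p$ a left divisor (resp.\ right divisor) of $q$, and $p \neq 1$, $p \neq q$, then $q \notin T$}; this follows from unitarity, and then each overlap configuration yields such a pair $(p,q)$. One should also double-check the degenerate subcase where $ab$ and $bc$ are the \emph{same} element of $T$ (self-overlap, e.g.\ $xyx$ or $xx$), which the closure argument rules out just as well.
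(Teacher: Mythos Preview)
Your proposal has a genuine gap in the overlap-freeness verification for the noninclusive case. You write: ``from $ab, bc \in N$ and closure properties I would derive that $b \in N$ (e.g.\ $a \in N$ as a left factor of $ab$, whence $b \in N$ by left-unitarity).'' But the claim ``$a \in N$ as a left factor of $ab$'' is simply false: nothing in the biunitary property says that an arbitrary left factor of an element of $N$ lies in $N$. Left-unitarity only lets you conclude $v\in N$ from $u\in N$ and $uv\in N$; it does not let you conclude $u\in N$ from $uv\in N$ alone.

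In fact the biunitary property is \emph{strictly weaker} than what is needed here, so no rearrangement of your closure axioms will rescue the step. Concretely: take $X=\{x,y\}$ and let $N$ be the submonoid of $\langl X\rangl$ generated by $\{xy,\,yx\}$. This set is a bifix code, so $N$ is biunitary and free on $\{xy,\,yx\}$; but $xy$ and $yx$ overlap noninclusively (via $b=y$, say), so the free generating set of $N$ is \emph{not} overlap-free. Thus ``biunitary submonoid of a free monoid'' does not imply ``free generating set is overlap-free,'' and your abstraction has discarded exactly the information you need.

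The paper's argument avoids this by working directly with one-sided invertibility in $M$ rather than with closure properties of $N$: since $b$ is a right divisor of $ab\in N$, its image $f(b)$ is left invertible by~\eqref{d.div-1-sided_inv}; since $b$ is a left divisor of $bc\in N$, $f(b)$ is right invertible; hence by~\eqref{d.rt+l} $f(b)$ is invertible, i.e.\ $b\in N$. From there $a,c\in N$ follow, and the contradiction with $ab,bc\in T$ goes through as you intended. Once $T$ is known to be overlap-free, the paper then invokes Proposition~\ref{P.inv} (valid for arbitrary overlap-free $T$) to conclude that $N=\langl T\rangl$ is free on $T$, rather than appealing to the Sch\"utzenberger-type result you cite.
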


\begin{proof}
Given $f$ as above, and defining $N$ by~\eqref{d.N}, let
\begin{equation}\begin{minipage}[c]{35pc}\label{d.T=}
$T \ = \ \{w\in N\setminus\{1\}~|~w$ is not a product
of two nonidentity elements of $N\}.$
\end{minipage}\end{equation}

Without using any fact about $N$ other than that it is
a submonoid of $\langl X\rangl,$ we can see that $T$ generates $N.$
(Given $w\in N\setminus\{1\},$ assume inductively that all elements
of $N$ of shorter length than $w$ belong to $\langl T\rangl.$
Then if $w$ cannot be factored in $N,$ we have $w\in T,$
while if it can, our inductive hypothesis implies that
$w\in\langl T\rangl.)$

Now assume $N$ as in~\eqref{d.N}.
If two elements of $T$ overlap, write them $ab$ and $bc,$
with $b\neq 1$ and $a$ and $c$ not both equal to $1.$
Then the image of $b$ in $M$ is both right and left
invertible, hence invertible, hence so are the images
of $a$ and $c,$ so if $a\neq 1$ we get
a contradiction to the assumption that $ab$ belongs to~\eqref{d.T=},
and dually we get a contradiction
if $c\neq 1;$ so $T$ is indeed overlap-free.
Hence by the last assertion of Proposition~\ref{P.inv},
the submonoid $N$ of $\langl X\rangl$ generated by $T$ is free on $T.$

The final converse statement is straightforward.
\end{proof}

Note that despite the nice characterization of
the submonoids $N$ that arise as in~\eqref{d.N}, very
little can be said about $f(N)$ in such situations.
The map $f$ need not be one-to-one on $T,$ and even if it is,
it may not be one-to-one on $N,$ and even if that holds,
so that the submonoid $f(N)\subseteq M$ is free on $f(T),$
the subgroup of $M$ that it generates may not be free.
For an example of the last sort, let $f$
map the free monoid $\langl x,\,y\rangl$ into the group
$G$ of maps $t\mapsto at+b$ $(a\neq 0)$ of the real line into itself,
by sending $x$ to $t\mapsto t/2$ and $y$ to $t\mapsto (t+1)/2.$
Looking at the behavior of these maps on the unit interval $[0,\,1],$
one can show that $f$ is one-to-one; but since the commutator subgroup
of $G$ is abelian, the induced homomorphism from the free group
$\langl x,\,y : \I(x),\,\I(y)\rangl$ to $G$ cannot be one-to-one.

But in the case of Proposition~\ref{P.gen_hm} where
$M$ has the form $\langl X:\I(S)\rangl,$ we can recover essentially
everything we proved in~\S\ref{S.main} for $S$ finite, except the
assertion that $T$ can be computed algorithmically from~$S\,{:}$

\begin{proposition}\label{P.inv_gen}
Let $X$ be a set, and $S$ a subset of $\langl X\rangl.$

Then there is a unique overlap-free subset
$T\subseteq \langl X\rangl\setminus\{1\}$ which is
inverse-equivalent to~$S.$
An element $w\in\langl X\rangl\setminus\{1\}$ belongs to $T$ if and
only if it satisfies the following equivalent conditions:\vspace{.3em}

{\rm(i)} $w$ is invertible in $\langl X:\I(S)\rangl,$ but
is not a product of two elements of $\langl X\rangl\setminus\{1\}$
that are invertible in $\langl X:\I(S)\rangl.$\vspace{.3em}

{\rm(ii)} $w$ is invertible in $\langl X:\I(S)\rangl,$ but
does not have a proper left divisor in $\langl X\rangl\setminus\{1\}$
that is left invertible in $\langl X:\I(S)\rangl.$\vspace{.3em}

{\rm(ii$\!'\!$)} $w$ is invertible in $\langl X:\I(S)\rangl,$ but
does not have a proper right divisor in $\langl X\rangl\setminus\{1\}$
that is right invertible in $\langl X:\I(S)\rangl.$\vspace{.3em}

{\rm(iii)}  There exists a finite subset $S'$ of $S$ such that
for all finite $S''$ with $S'\subseteq S''\subseteq S,$
$w$ belongs to the overlap-free set $T''$ inverse-equivalent
to $S''$ {\rm(}whose existence and uniqueness were shown in
Propositions~\ref{P.oE_T} and~\ref{P.T=mnl}{\rm)}.
\end{proposition}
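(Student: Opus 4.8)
The plan is to isolate the one genuinely ``infinite'' ingredient — that $\langl X:\I(S)\rangl$ is the direct limit, over finite $S''\subseteq S$, of the monoids $\langl X:\I(S'')\rangl$ — and then to reduce everything else to the results of \S\ref{S.main} together with Proposition~\ref{P.gen_hm}. So first I would record, for finite $S_1\subseteq S_2\subseteq S$, the natural homomorphisms $\langl X:\I(S_1)\rangl\to\langl X:\I(S_2)\rangl\to\langl X:\I(S)\rangl$ furnished by the universal property (each restricting to the identity on $\langl X\rangl$), and the fact that $\langl X:\I(S)\rangl$ is the direct limit of the $\langl X:\I(S'')\rangl$: every element of $\langl X:\I(S)\rangl$ is a product of elements of $X\cup\I(S)$, hence lies in the image of some $\langl X:\I(S'')\rangl$, while any equation holding in $\langl X:\I(S)\rangl$ is a consequence of finitely many of the defining relations, hence already holds in some $\langl X:\I(S''')\rangl$. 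The only consequence I need is: \emph{a word $w\in\langl X\rangl$ is invertible {\rm(}resp.\ right-invertible, resp.\ left-invertible{\rm)} in $\langl X:\I(S)\rangl$ if and only if it is so in $\langl X:\I(S'')\rangl$ for some finite $S''\subseteq S$.} Here ``if'' is immediate, and for ``only if'' one takes a one-sided inverse $z$ of $w$, realizes $z$ as the image of some $z_0\in\langl X:\I(S'')\rangl$, notes that $wz_0$ and $1$ are elements of $\langl X:\I(S'')\rangl$ with the same image in $\langl X:\I(S)\rangl$ and hence already agree in some larger finite $\langl X:\I(S''')\rangl$, and finally merges the finitely many witnesses for left- and right-invertibility into a single finite $S''$.

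For existence and uniqueness I would apply Proposition~\ref{P.gen_hm} to the canonical homomorphism $f\colon\langl X\rangl\to\langl X:\I(S)\rangl$. It exhibits $N:=\{w\in\langl X\rangl~|~w$ is invertible in $\langl X:\I(S)\rangl\}$ as a free submonoid of $\langl X\rangl$ whose unique free generating set $T=\{w\in N\setminus\{1\}~|~w$ is not a product of two nonidentity elements of $N\}$ is overlap-free; and this $T$ is, by inspection, precisely the set of $w\in\langl X\rangl\setminus\{1\}$ satisfying~(i). That $T$ is inverse-equivalent to $S$ follows from condition~(i) of Lemma~\ref{L.S_&_S'}: every element of $T\subseteq N$ is invertible in $\langl X:\I(S)\rangl$, and conversely every $s\in S$ maps to an invertible element of $\langl X:\I(S)\rangl$, hence lies in $N=\langl T\rangl$, hence is a product of elements of $T$ and so becomes invertible in $\langl X:\I(T)\rangl$. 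For uniqueness, if $T_1\subseteq\langl X\rangl\setminus\{1\}$ is any overlap-free set inverse-equivalent to $S$, then Lemma~\ref{L.S_&_S'} supplies an isomorphism $\langl X:\I(T_1)\rangl\cong\langl X:\I(S)\rangl$ compatible with the maps from $\langl X\rangl$, so a word of $\langl X\rangl$ is invertible in $\langl X:\I(T_1)\rangl$ exactly when it lies in $N$; the first assertion of Proposition~\ref{P.T=mnl} — which is established there for an arbitrary overlap-free set — then identifies $T_1$ with the set of atoms of $N$, i.e.\ with $T$.

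It remains to prove the equivalence of (i), (ii), (ii$'$) and (iii); since each of these includes the clause ``$w$ is invertible in $\langl X:\I(S)\rangl$'' and $w\in\langl X\rangl\setminus\{1\}$ throughout, all four reduce to statements about $w\in N\setminus\{1\}$, with (i) saying precisely that $w$ is an \emph{atom} of $N$ (a nonidentity element of $N$ not a product of two nonidentity elements of $N$). For (i)$\Leftrightarrow$(ii): if $w\in T$ had a proper left divisor $u\in\langl X\rangl\setminus\{1\}$ that is left-invertible in $\langl X:\I(S)\rangl$, write $w=uv$ in $\langl X\rangl$ with $v\neq 1$; as a left divisor of the invertible element $w$, $u$ is right-invertible by~\eqref{d.div-1-sided_inv}, hence invertible by~\eqref{d.rt+l}, so $u\in N$, and hence $v=u^{-1}w$ is invertible in $\langl X:\I(S)\rangl$ too, i.e.\ $v\in N$ — contradicting that $w$ is an atom. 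Conversely, any factorization $w=uv$ with $u,v\in N\setminus\{1\}$ displays $u$ as a proper left divisor of $w$ in $\langl X\rangl\setminus\{1\}$ which is (left-)invertible in $\langl X:\I(S)\rangl$, so (ii) forces $w$ to be an atom of $N$; condition (ii$'$) is the left--right dual. For (i)$\Leftrightarrow$(iii): given (i), the finite-witness statement yields a finite $S_0\subseteq S$ with $w$ invertible in $\langl X:\I(S_0)\rangl$, and for any finite $S''$ with $S_0\subseteq S''\subseteq S$ the set $N_{S''}$ of words invertible in $\langl X:\I(S'')\rangl$ satisfies $N_{S''}\subseteq N$ (the map $\langl X:\I(S'')\rangl\to\langl X:\I(S)\rangl$ carries invertibles to invertibles and fixes $\langl X\rangl$), so the atom $w$ of $N$, lying in $N_{S''}$, is an atom of $N_{S''}$, whence $w\in T''$ by the finite case of Proposition~\ref{P.T=mnl}; thus (iii) holds with $S'=S_0$. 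Conversely, given (iii) we have $w\in T'\subseteq N$ (take $S''=S'$); were $w=uv$ with $u,v\in N\setminus\{1\}$, the finite-witness statement would give a finite $S''$ with $S'\subseteq S''\subseteq S$ in which both $u$ and $v$ are invertible, so $w$ would be a product of two nonidentity words invertible in $\langl X:\I(S'')\rangl$ and thus $w\notin T''$ by Proposition~\ref{P.T=mnl}, contradicting (iii); hence $w$ is an atom of $N$. Since $T$ was already identified with the set of $w$ satisfying (i), this gives the asserted description of $T$.

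The step I expect to be the main obstacle is the first: pinning down the directed-union structure of $\langl X:\I(S)\rangl$ carefully enough to justify that any one- or two-sided invertibility in $\langl X:\I(S)\rangl$ is already witnessed at a finite stage, and keeping the bookkeeping of finitely many witnesses straight. Once that is in hand, everything else is a mechanical reduction to Propositions~\ref{P.gen_hm}, \ref{P.inv} and~\ref{P.T=mnl} together with the elementary observations~\eqref{d.div-1-sided_inv} and~\eqref{d.rt+l}.
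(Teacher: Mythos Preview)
Your proposal is correct and follows essentially the same route as the paper: apply Proposition~\ref{P.gen_hm} to the canonical map $\langl X\rangl\to\langl X:\I(S)\rangl$ to produce the overlap-free $T$ as the set of atoms of $N$ (condition~(i)), then prove (i)$\Leftrightarrow$(ii) via~\eqref{d.div-1-sided_inv} and~\eqref{d.rt+l}, and (i)$\Leftrightarrow$(iii) via the direct-limit description of $\langl X:\I(S)\rangl$ over finite $S''\subseteq S$. The only difference is one of explicitness: you spell out the finite-witness argument for one- and two-sided invertibility and separately verify inverse-equivalence of $T$ with $S$ and uniqueness via Proposition~\ref{P.T=mnl}, whereas the paper compresses all of this into a one-line citation of Proposition~\ref{P.gen_hm} and the phrase ``using the observation that $\langl X:\I(S)\rangl$ is the direct limit\ldots''.
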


\begin{proof}
The first assertion of Proposition~\ref{P.gen_hm},
applied to the case $M=\langl X:\I(S)\rangl,$ shows that the elements
$w\in\langl X\rangl\setminus\{1\}$ that satisfy~(i) comprise
the unique overlap-free subset of $\langl X\rangl\setminus\{1\}$ that
is inverse-equivalent to~$S.$
It remains to show the equivalence of~(i) with the
other conditions listed.
Since~(i) and~(iii) are left-right symmetric, while~(ii$\!'\!$)
is the left-right dual of~(ii),
it suffices to show~(i), (ii) and~(iii) equivalent.

We get (i)$\implies$(ii) by contradiction:  If an element
$w\in\langl X\rangl\setminus\{1\}$
is invertible in $\langl X:\I(S)\rangl$ and has a factorization
$w=uv$ in $\langl X\rangl\setminus\{1\}$
with $u$ left invertible in $\langl X:\I(S)\rangl,$
then $u$ is both left and right invertible in the latter monoid
(right invertible because it is a left divisor of the
invertible element $w),$ hence it is invertible
there, hence $v=u^{-1}w$ is also invertible there,
so the factorization $w=uv$ contradicts~(i).

(ii)$\implies$(i) is immediate, since a failure of (i) is clearly
a failure of~(ii).

We shall prove (i) equivalent to (iii) using the
observation that $\langl X:\I(S)\rangl$ is the direct limit
of the monoids $\langl X:\I(S')\rangl$ as $S'$ ranges over
the finite subsets of $S.$

Assuming~(iii), choose $S'$ for our element $w$ as in~(iii),
and let $T'$ be the overlap-free set inverse-equivalent to $S'$
given by Proposition~\ref{P.oE_T}.
Then~$w\in T'$ is invertible
in $\langl X:\I(T')\rangl=\langl X:\I(S')\rangl,$ hence
also invertible in $\langl X:\I(S)\rangl,$ proving
the first condition of~(i).
Next, suppose by way of contradiction that $w$ could be written
as a product $w=uv$ where $u,\,v\in\langl X\rangl\setminus\{1\}$
both become invertible in $\langl X:\I(S)\rangl.$
Then taking $S'$ as above, and letting $S_u$ and $S_v$
be finite subsets of $S$ such that $u$ becomes invertible
in $\langl X:\I(S_u)\rangl$ and $v$ in $\langl X:\I(S_v)\rangl,$
we see that for $S''=S'\cup S_u \cup S_v,$ (iii) asserts
on the one hand that $w$ belongs to the overlap-free set $T'',$
and on the other hand that it
factors nontrivially into the elements $u$ and $v$
invertible in $\langl X:\I(T'')\rangl,$ a contradiction.
So~(iii)$\implies$(i).

The reverse implication is quicker: \ Assuming~(i),
$w$ is invertible in $\langl X:\I(S)\rangl$ hence in
$\langl X:\I(S')\rangl$ for some finite $S'\subseteq S;$
and if the final part of~(iii) failed, this would mean
that $w$ had a nontrivial factorization in $\langl X\rangl$
into factors invertible in some $\langl X:\I(S'')\rangl,$
hence invertible in $\langl X:\I(S)\rangl,$ contradicting~(i).
\end{proof}

We remark that though the assumption that $S$ was finite
was used in Proposition~\ref{P.oE_T}
and the second paragraph of Proposition~\ref{P.T=mnl},
we never assumed the generating
set $X$ finite; and it would not have simplified
things if we had, since for finite $X,$ $\langl X\rangl-\{1\}$
can still contain an infinite overlap-free subset~$T.$
E.g., for $X=\{x,\,y,\,z\},$ the set
$T=\{x\,y^i\,z~|~i\geq 0\}$ is overlap-free.
And in fact, if we take for $T$ the subset
of the latter set gotten by letting $i$ range over an
algorithmically undecidable subset of the natural numbers,
we get a non-algorithmically-decidable~$T.$
(With a little thought one can see that the same observations
hold with $X=\{x,\,y\}$ and elements $x\,y^i\,z$ replaced
by $x^i\,y^i.)$

In another direction, I have not investigated either of
\begin{question}\label{Q.iterate}
Suppose $X$ is a set and $S_1$ a subset of $\langl X\rangl-\{1\},$ and
we then take a subset $S_2\subseteq$ \mbox{$\langl X:\I(S_1)\rangl,$}
and adjoin to $\langl X:\I(S_1)\rangl$ universal
inverses to all elements of $S_2,$ getting a monoid
that we might denote $\langl X:\I(S_1):\I(S_2)\rangl.$

Can we get for such a monoid structure results of a similar
nature to those obtained above?
More generally, can we get such results for monoids
$\langl X:\I(S_1):\,\ldots\,:\I(S_n)\rangl$ obtained
by iterating this approach in the obvious way?
\end{question}

\begin{question}\label{Q.1-sided}
Suppose $X$ is a set, and $S_l,$ $S_r$ are subsets
of $\langl X\rangl-\{1\}.$
Let $\langl X:\I_l(S_l),\,\I_r(S_r)\rangl$
denote the monoid obtained by adjoining
a universal {\em left} inverse to each element of $S_l,$ and
a universal {\em right} inverse to each element of $S_r.$

Or more generally, since $\!1\!$-sided inverses are not
in general unique, suppose $c_l$ and $c_r$ are cardinal-valued
functions on $S_l$ and $S_r,$ and we
let $\langl X:\I_l(S_l,\,c_l),\,\I_r(S_r,\,c_r)\rangl$
be the monoid obtained by adjoining to each $w\in S_l$
a universal $\!c_l(w)\!$-tuple of left inverses, and to each
$w\in S_r$ a universal $\!c_r(w)\!$-tuple of right inverses.

Can one obtain nice structure results for such monoids?
{\rm(}And perhaps for monoids given
by iterated versions of these constructions, analogous
to those asked about in Question~\ref{Q.iterate} above?{\rm)}
\end{question}

\section{Comparison with special monoids}\label{S.special}

There has been considerable research on what are called
``special monoids'':~ monoids presented by a finite set of generators
and a finite family of relations of the form $w_i=1.$
A striking result from~\cite{Adjan}, summarized in
\cite[p.\,1876, statement~(A1), and
Theorem~1.1 and following paragraph]{RG+NR},
is that any special monoid presented by just {\em one} such
relation has solvable word problem.

The solvability of the word problem in any monoid
of the form $\langl X:\I(S)\rangl$ with $X$ and $S$ finite,
shown in~\S\ref{S.main} above, can
also be deduced from that result of~\cite{Adjan}: \ If we
form as in Lemma~\ref{L.1-elt} above a word $w\in\langl X\rangl$
such that $\langl X:\I(w)\rangl\cong\langl X:\I(S)\rangl,$
and then adjoin one more generator $z$ to $X,$ I claim
that the above monoid
can also be presented as $\langl X\cup\{z\}\ |\ w\,z\,w=1\rangl.$
Indeed, the relation $w\,z\,w=1$ makes $w$ a left
and right factor of an invertible element, hence invertible, hence
$z=w^{-2},$ so $z$ can be dropped from the
generators if we include in the presentation an inverse $\I(w).$
So facts about one-relator special monoids apply in particular
to monoids of the sort studied in~\S\ref{S.main} above.

On the other hand, special monoids defined using more than one
relation $w_i=1$ are much less well-behaved.
Indeed, it is proved in~\cite{PN+CO+FO} that whether such
a monoid is a group is undecidable.
Moreover, if such a monoid {\em is} a group, it can
have undecidable word problem, since
any finitely presented group can be finitely presented as a
special monoid, and there are known to be finitely presented
groups $G$ with undecidable word problem.

In any case, the undecidability result
of~\cite{PN+CO+FO} answers a question I was wondering about.
Suppose $X$ is a set and $\{w_1,\,\dots,\,w_n\}$ a finite subset
of $\langl X\rangl.$
In view of the natural homomorphism
$\langl X:\I(w_1),\,\dots,\,\I(w_n)\rangl\to
\langl X\,|\,w_1=1,\,\dots,\,w_n=1\rangl,$ see that
\begin{equation}\begin{minipage}[c]{35pc}\label{d.inclusion}
The set of elements of $\langl X\rangl$ that become invertible
in $\langl X:\I(w_1),\,\dots,\,\I(w_n)\rangl$
is contained in the set that become invertible in
$\langl X\,|\,w_1=1,\,\dots,\,w_n=1\rangl.$
\end{minipage}\end{equation}
Is that inclusion always an equality?
The answer must be no, since we can see
from the results of~\S\ref{S.main} that it is algorithmically
decidable whether all elements of $\langl X\rangl$
become invertible in $\langl X:\I(w_1),\,\dots,\,\I(w_n)\rangl;$
hence if the inclusion of~\eqref{d.inclusion}
were an equality, it would be
decidable whether all elements of $\langl X\rangl$
become invertible in $\langl X\,|\,w_1=1,\,\dots,\,w_n=1\rangl,$
i.e., whether that monoid is a group,
contradicting the result of~\cite{PN+CO+FO} mentioned.

Some of the many works on special monoids are \cite{Adjan},
\cite{PN+CO+FO}, \cite{FO+LZ}, \cite{DP+PS}, \cite{LZ}.
In particular,~\cite[\S5]{LZ} gives simplified proofs of
results in the earlier literature, such as that the word problem
for a finitely presented special monoid is decidable if and only if
the word problem for its group of units is decidable.
There is also much information about special monoids
in \cite{RG+NR} and \cite{GL},
though the main subject of the former paper is {\em inverse monoids}
(monoids $M$ such that for every $x\in M$ there is a unique
$x'\in M$ such that $x\,x'\,x=x$ and $x'\,x\,x'=x',$
and where for all $x$
and $y,$ the elements $x\,x'$ and $y\,y'$ commute), and the main
subject of the latter paper is monoids presented by an
arbitrary single relation~$w_1=w_2.$

\section{Acknowledgements}\label{S.ackn}

I am indebted to Benjamin Steinberg for informing me that
it is known to be undecidable whether a finitely
presented special monoid is a group, and to Robert Gray for further
information on what is known about such monoids, and
for helpful comments on earlier drafts of this note.


\begin{thebibliography}{00}

\bibitem{Adjan}  S.\,I.\,Adjan,
{\em Defining relations and algorithmic problems for groups
and semigroups,}
translated from the Russian by M.\,Greendlinger.
Proceedings of the Steklov Institute of Mathematics, No. 85~(1966),
American Mathematical Society, Providence, RI, 1966. iii+152~pp.
MR0218434

\bibitem{<>} George M. Bergman,
{\em The diamond lemma for ring theory,}
Advances in Mathematics {\bf 29} (1978) 178--218.
MR0506890.

\bibitem{RG+NR} Robert D. Gray and Nik Ruškuc,
{\em On groups of units of special and one-relator inverse monoids,}
J. Inst. Math. Jussieu {\bf 23} (2024) 1875-1918.
MR4803689

\bibitem{H+E+O} Derek F. Holt, Bettina Eick and Eamonn A. O'Brien,
{\em Handbook of computational group theory,}
Discrete Mathematics and its Applications,
Chapman \& Hall/CRC, Boca Raton, FL, 2005. xvi+514 pp.
MR2129747

\bibitem{GL} Gérard Lallement,
{\em On monoids presented by a single relation,}
J. Algebra {\bf 32} (1974) 370-388.
MR0354908


\bibitem{PN+CO+FO} Paliath Narendranm, Colm Ó'Dúnlaing
and Friedrich Otto,
{\em It is undecidable whether a finite special string-rewriting
system presents a group}, Discrete Mathematics {\bf 98} (1991) 153-159.
MR1144634

\bibitem{Newman} M. H. A. Newman,
{\em On theories with a combinatorial definition of ``equivalence'',}
Ann. of Math. (2) {\bf 43} (1942) 223-243.
MR0007372

\bibitem{FO+LZ} Friedrich Otto and Louxin Zhang,
{\em Decision problems for finite special string-rewriting systems
that are confluent on some congruence class,}
Acta Inform. {\bf 28} (1991) 477-510.
MR1143865

\bibitem{DP+PS} D. Perrin and P. Schupp,
{\em Sur les monoïdes à un relateur qui sont des groupes},
Theoret. Comput. Sci. {\bf 33} (1984) 331-334.
MR0767398

\bibitem{LZ} Louxin Zhang,
{\em Applying rewriting methods to special monoids,}
Math. Proc. Cambridge Philos. Soc. {\bf 112} (1992) 495-505.
MR1177998

\end{thebibliography}
\end{document}